\numberwithin{equation}{section}
\newtheorem{letterthm}{Theorem}
\newtheorem{thm}{Theorem}[section]
\newtheorem{lem}[thm]{Lemma}
\newtheorem{cor}[thm]{Corollary}
\theoremstyle{definition}
\newtheorem{rem}[thm]{Remark}
\newtheorem*{example}{Example}
\newtheorem*{definition}{Definition}
\newcommand{\R}{\mathbb{R}}
\newcommand{\C}{\mathbb{C}}
\newcommand{\N}{\mathbf{N}}
\newcommand{\Fix}{\operatorname{Fix}}
\newcommand{\Aut}{\operatorname{Aut}}
\newcommand{\Prob}{\operatorname{Prob}}
\newcommand{\supp}{\operatorname{supp}}
\newcommand{\rk}{\operatorname{rk}}
\newcommand{\rE}{\operatorname{ E}}
\newcommand{\rC}{\operatorname{C}}
\newcommand{\rL}{\operatorname{L}}
\newcommand{\Har}{\operatorname{Har}}
\begin{document}

\title[The noncommutative factor theorem for lattices in product groups]{The noncommutative factor theorem \\ for lattices in product groups}

\begin{abstract}
We prove a noncommutative Bader-Shalom factor theorem for lattices with dense projections in product groups. As an application of this result and our previous works, we obtain a noncommutative Margulis factor theorem for all irreducible lattices $\Gamma < G$ in higher rank semisimple algebraic groups. Namely, we give a complete description of all intermediate von Neumann subalgebras $\rL(\Gamma) \subset M \subset \rL(\Gamma \curvearrowright G/P)$ sitting between the group von Neumann algebra and the group measure space von Neumann algebra associated with the action on the Furstenberg-Poisson boundary.
\end{abstract}

\author{R\'emi Boutonnet}
\address{Institut de Math\'ematiques de Bordeaux \\ CNRS \\ Universit\'e Bordeaux I \\ 33405 Talence \\ FRANCE}
\email{remi.boutonnet@math.u-bordeaux.fr}
\thanks{RB is supported by ANR grant AODynG 19-CE40-0008}

\author{Cyril Houdayer}
\address{Universit\'e Paris-Saclay \\  Laboratoire de Math\'ematiques d'Orsay\\ CNRS \\ Institut Universitaire de France \\ 91405 Orsay\\ FRANCE}
\email{cyril.houdayer@universite-paris-saclay.fr}
\thanks{CH is supported by Institut Universitaire de France}

\subjclass[2020]{22E40, 37A40, 46L10, 46L55}
\keywords{Factor theorem; Furstenberg-Poisson boundaries; Group measure space von Neumann algebras; Higher rank lattices; Semisimple algebraic groups}

\maketitle

\section{Introduction and statement of the main results}

Let $H$ be a locally compact second countable (lcsc) group. A Borel probability measure $\mu \in \Prob(H)$ is said to be {\bf admissible} if $\mu$ is absolutely continuous with respect to the Haar measure, $\supp(\mu)$ generates $H$ as a semigroup, and $\supp(\mu)$ contains a neighborhood of the identity element $e \in H$. A standard probability space $(X, \nu)$ is said to be a $(H, \mu)$-{\bf space} if it is endowed with a nonsingular action $H \curvearrowright (X, \nu)$ for which the probability measure $\nu$ is $\mu$-{\bf stationary}, that is, $\nu = \mu \ast \nu$.

Following \cite{Fu62a, Fu62b}, for every admissible Borel probability measure $\mu \in \Prob(H)$, we denote by $(B, \nu_B)$ the $(H, \mu)$-{\bf Furstenberg-Poisson boundary}. Recall that $(B, \nu_B)$ is the unique $(H, \mu)$-space for which the $H$-equivariant Poisson transform 
$$\rL^\infty(B, \nu_B) \to \Har^\infty(H, \mu) : f \mapsto \left( h \mapsto \int_B f(h b) \, {\rm d}\nu_B(b) \right)$$
is isometric and surjective. Here,  $\Har^\infty(H, \mu)$ denotes the space of bounded (right) $\mu$-harmonic functions on $H$. A $(H, \mu)$-{\bf boundary} $(C, \nu_C)$ is a $H$-equivariant measurable factor of $(B, \nu_B)$. In that sense, $(B, \nu_B)$ is the maximal $(H, \mu)$-boundary. For every $(H, \mu)$-boundary $(C, \nu_C)$, we regard $\rL^\infty(C) \subset \rL^\infty(B)$ as a $H$-invariant von Neumann subalgebra. Note that the center $\mathscr Z(H)$ acts trivially on $(B, \nu_B)$. We refer to \cite{Fu00, BS04} for further details on the Furstenberg-Poisson boundary.

\begin{definition}
We say that the pair $(H, \mu)$ satisfies the {\bf boundary freeness condition} if for every nontrivial $(H, \mu)$-boundary $(C, \nu_C)$ and every $h \in H \setminus \mathscr Z(H)$, we have $\nu_C(\Fix_C(h)) = 0$ where $\Fix_C(h) = \left\{c \in C \mid hc = c \right\}$.
\end{definition}

\begin{example}
Let $k$ be a local field, $\mathbf H$ a $k$-isotropic almost $k$-simple linear algebraic $k$-group and $\mu \in \Prob(\mathbf H(k))$ an admissible Borel probability measure. A combination of \cite[Corollary 5.2]{BS04} and \cite[Lemma 6.2]{BBHP20} shows that the pair $(\mathbf H(k), \mu)$ satisfies the boundary freeness condition.
\end{example}

Recall that the {\bf quasi-center} $\mathscr{Q}\mathscr{Z}(H)$ is the (not necessarily closed) subgroup of all elements $h \in H$ for which the centralizer $\mathscr Z_H(h)$ is open in $H$. We have $\mathscr Z(H) < \mathscr{Q}\mathscr{Z}(H)$.

In order to state our main result, we introduce the following notation. Let $d \geq 2$. For every $i \in \{1, \dots, d\}$, let $G_i$ be a lcsc group and $\mu_i \in \Prob(G_i)$ an admissible Borel probability measure. Denote by $(B_i, \nu_{B_i})$ the $(G_i, \mu_i)$-Furstenberg-Poisson boundary. Set $(G, \mu) = \prod_{i = 1}^d (G_i, \mu_i)$ and $(B, \nu_B) = \prod_{i = 1}^d (B_i, \nu_{B_i})$. Then $(B, \nu_B)$ is the $(G, \mu)$-Furstenberg-Poisson boundary (see \cite[Corollary 3.2]{BS04}). For every $i \in \{1, \dots, d\}$, denote by $p_i : G \to G_i$ and by $\widehat p_i : G \to \prod_{j \neq i} G_j$ the canonical homomorphisms.

\begin{definition}
Let $\Gamma < G$ be a lattice, that is, $\Gamma < G$ is a discrete subgroup with finite covolume. We say that $\Gamma < G$ is {\bf embedded with dense projections} if for every $i \in \{1, \dots, d\}$, the restriction $p_i|_\Gamma : \Gamma \to G_i$ is injective and $\widehat p_i(\Gamma) < \prod_{j \neq i} G_j$ is dense.
\end{definition}

Let $\Gamma < G$ be a lattice embedded with dense projections and assume that its center $\mathscr Z(\Gamma)$ is finite. Note that $\mathscr Z(\Gamma) < \prod_{i = 1}^d \mathscr Z(G_i)$ and so $\mathscr Z(\Gamma)$ acts trivially on $(B, \nu_B)$. Set $\Lambda = \Gamma/\mathscr Z(\Gamma)$. Consider the well-defined ergodic action $\Lambda \curvearrowright (B, \nu_B)$ and denote by $\rL(\Lambda \curvearrowright B)$ its associated {\bf group measure space} von Neumann algebra. Whenever $(C, \nu_C)$ is a $(G, \mu)$-boundary, we regard $\rL(\Lambda \curvearrowright C) \subset \rL(\Lambda \curvearrowright B)$ as a von Neumann subalgebra.

Our main result is the following noncommutative analogue of Bader-Shalom's factor theorem for lattices in product groups (see \cite[Theorem 1.7]{BS04}). We call such a result a {\bf Noncommutative Factor Theorem (NFT)}.

\begin{letterthm}[NFT for lattices in products]\label{thmA}
Keep the same notation as above. For every $i \in \{1, \dots, d\}$, assume that $\mathscr{Q}\mathscr{Z}(G_i) = \mathscr Z(G_i)$ and that the pair $(G_i, \mu_i)$ satisfies the boundary freeness condition. Let $\rL(\Lambda) \subset M \subset \rL(\Lambda \curvearrowright B)$ be an intermediate von Neumann subalgebra. 

Then for every $i \in \{1, \dots, d\}$, there exists a unique $(G_i, \mu_i)$-boundary $(C_i, \nu_{C_i})$ such that with $(C, \nu_C) =  \prod_{i = 1}^d (C_i, \nu_{C_i})$, we have $M = \rL(\Lambda \curvearrowright C)$.
\end{letterthm}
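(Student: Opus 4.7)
The plan is to split the proof into a noncommutative step and a commutative step. In the noncommutative step, I would show that every intermediate subalgebra $M$ is automatically of the form $M = A \rtimes \Lambda$ for some $\Lambda$-invariant $\mathrm{W}^*$-subalgebra $A \subset \rL^\infty(B)$; in the commutative step, the classical Bader--Shalom factor theorem \cite[Theorem 1.7]{BS04} identifies $A$ with $\rL^\infty(\prod_i C_i)$ for suitable $(G_i, \mu_i)$-boundaries $C_i$.

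For the noncommutative step, set $A := M \cap \rL^\infty(B)$, which is automatically a $\Lambda$-invariant $\mathrm{W}^*$-subalgebra of $\rL^\infty(B)$, and prove the reverse inclusion $M \subset A \rtimes \Lambda$. This is where the hypotheses $\mathscr{Q}\mathscr{Z}(G_i) = \mathscr Z(G_i)$ and boundary freeness enter. Concretely, for each index $i$ I would decompose $\rL^\infty(B) = \rL^\infty\bigl(\prod_{j \neq i} B_j\bigr) \ovt \rL^\infty(B_i)$ and exploit the $G_i$-equivariant structure together with the noncommutative boundary analysis developed in the authors' previous works (in particular the techniques of \cite{BBHP20}) to control the position of $M$. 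The density of $\widehat p_i(\Lambda)$ in $\prod_{j \neq i} G_j$ is crucial: it converts $\Lambda$-normalisation into effectively $G$-normalisation, and combined with $\mu_i$-stationarity on the $i$-th coordinate it should force any element of $M$ to lie in $A \rtimes \Lambda$. This step is the main obstacle; it is where all of the noncommutative content lies and where the two structural assumptions on each $G_i$ are used.

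With the crossed product decomposition $M = A \rtimes \Lambda$ in hand, $A$ is a $\Lambda$-invariant (and ergodic) $\mathrm{W}^*$-subalgebra of $\rL^\infty(B)$. Writing $A = \rL^\infty(Y)$ for a $\Lambda$-equivariant measurable factor $(Y, \nu_Y)$ of $(B, \nu_B)$, the classical Bader--Shalom factor theorem \cite[Theorem 1.7]{BS04} --- whose hypotheses are met under our boundary freeness and dense projection assumptions --- guarantees that $Y$ splits as $Y = \prod_{i=1}^d C_i$ with each $C_i$ a $(G_i, \mu_i)$-boundary. Hence $A = \rL^\infty(C)$ with $C = \prod_i C_i$, and therefore $M = \rL(\Lambda \curvearrowright C)$. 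Uniqueness of the $C_i$ is then automatic, since $C_i$ is recovered from $A$ by pushforward under the coordinate projection $B \to B_i$.
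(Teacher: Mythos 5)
Your commutative step is fine and matches the endgame of the paper, but the noncommutative step --- ``show that $M = A \rtimes \Lambda$ with $A = M \cap \rL^\infty(B)$'' --- is precisely the main difficulty and you have not actually given an argument for it. The phrase ``it should force any element of $M$ to lie in $A \rtimes \Lambda$'' is where the proof needs to live, and it is left as a hope.

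The concrete obstruction is this: the tool that produces such a crossed-product decomposition is Suzuki's theorem \cite[Theorem 3.6]{Su18}, and that theorem requires the action of $\Lambda$ on the \emph{base} to be essentially free. For the inclusion $\rL(\Lambda) \subset M \subset \rL(\Lambda \curvearrowright B)$ the base is a point, and the action is far from free, so Suzuki cannot be invoked directly. The paper deals with this by first locating a \emph{nontrivial} $(G_i, \mu_i)$-boundary $D_i$ with $\rL(\Lambda \curvearrowright D_i) \subset M$, and only then applying Suzuki to $\rL(\Lambda \curvearrowright D_i) \subset M \subset \rL(\Lambda \curvearrowright B)$, using the boundary freeness condition to guarantee $\Lambda \curvearrowright D_i$ is essentially free. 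This intermediate free sub-boundary is the hinge you are missing.

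Producing that $D_i$ is exactly where the assumption $\mathscr{Q}\mathscr{Z}(G_i) = \mathscr Z(G_i)$ is used, and it does not reduce to $\Lambda$-invariance plus denseness of the projections as you suggest. One considers the conjugation action $\Gamma \curvearrowright M$ and the algebra $M_i$ of $G_i$-continuous elements. The nontrivial inputs are: (1) the dichotomy theorem for boundary structures \cite[Theorem 5.8]{BBHP20}, which shows some $M_i$ is nontrivial when $M \neq \rL(\Lambda)$; and (2) Theorem \ref{thm-continuous} / Corollary \ref{cor-ergodic} of this paper, which compute the $G_i$-continuous elements of the \emph{whole crossed product} $\rL(\Lambda \curvearrowright B)$ to be exactly $\rL^\infty(B_i)$ --- this computation needs $\mathscr{Q}\mathscr{Z}(G_i) = \mathscr Z(G_i)$ and a careful sequence construction (Lemma \ref{lem-sequence}) to kill off any nontrivial Fourier coefficients. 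Without this computation, the $G_i$-continuous elements of $M$ could a priori involve unitaries $u_\gamma$ and the argument would not produce a classical boundary $D_i$ with $\rL^\infty(D_i) \subset M$. Your proposed $A = M \cap \rL^\infty(B)$ is $\Lambda$-invariant, but nothing you have written shows $M \subset A \rtimes \Lambda$, nor that $A$ is nontrivial when $M \neq \rL(\Lambda)$.
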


We point out that unlike the proof of the recent NFT for lattices in higher rank simple algebraic $k$-groups obtained in \cite{BBH21} (see also \cite{Ho21}), we cannot rely on the noncommutative Nevo-Zimmer theorem from \cite{BH19, BBH21}. The proof of Theorem \ref{thmA} consists of two steps. Firstly, we show that when $\rL(\Lambda) \subset M \subset \rL(\Lambda \curvearrowright B)$ and $\rL(\Lambda) \neq M$, there exist $i \in \{1, \dots, d\}$ and a nontrivial $(G_i, \mu_i)$-boundary $(C_i, \nu_{C_i})$ such that $\rL(\Lambda \curvearrowright C_i) \subset M$. To do this, we consider the conjugation action $\Lambda \curvearrowright M$, we exploit the assumption that $\mathscr{Q}\mathscr{Z}(G_i) = \mathscr Z(G_i)$ and we use the dichotomy theorem for boundary structures  \cite[Theorem 5.8]{BBHP20} (in lieu of the noncommutative Nevo-Zimmer theorem \cite{BH19, BBH21}). Secondly, we exploit the assumption that the pair $(G_i, \mu_i)$ satisfies the boundary freeness condition and we combine Bader-Shalom's factor theorem \cite{BS04} and Suzuki's result \cite{Su18} to show that there exists a unique $(G, \mu)$-boundary $(C, \nu_C)$ such that $M = \rL(\Lambda \curvearrowright C)$.

Next, we apply Theorem \ref{thmA} to the setting of higher rank lattices. We introduce the following notation. Let $d \geq 1$. For every $i \in \{1, \dots, d\}$, let $k_i$ be a local field, $\mathbf G_i$ a  simply connected $k_i$-isotropic almost $k_i$-simple linear algebraic $k_i$-group and set $G_i = \mathbf G_i(k_i)$. 

\begin{definition}
Set $G = \prod_{i = 1}^d G_i$. We say that $\Gamma < G$ is a {\bf higher rank lattice} if the following conditions are satisfied:
\begin{itemize}
\item [$(\rm i)$] $\Gamma < G$ is a discrete subgroup with finite covolume; 
\item [$(\rm ii)$] If $d \geq 2$, then $\Gamma < G$ is embedded with dense projections;
\item [$(\rm iii)$] $\sum_{i = 1}^d \rk_{k_i}(\mathbf G_i) \geq 2$.
\end{itemize}
\end{definition}

Observe that the higher rank assumption $\sum_{i = 1}^d \rk_{k_i}(\mathbf G_i) \geq 2$ implies that exactly one of the following two situations happens. 
\begin{itemize}
\item Either $d  = 1$ ({\bf simple} case). Then $k_1 = k$ and $\mathbf G_1 = \mathbf G$ is an almost $k$-simple algebraic $k$-group such that $\rk_k(\mathbf G) \geq 2$.
\item Or $d \geq 2$ ({\bf semisimple} or {\bf product} case).
\end{itemize}

Let $\Gamma < G$ be a higher rank lattice. For every $i \in \{1, \dots, d\}$, choose a minimal parabolic $k_i$-subgroup $\mathbf P_i < \mathbf G_i$ and set $P_i = \mathbf P_i(k_i)$. Set $P = \prod_{i = 1}^d P_i$ and endow the homogeneous space $G/P$ with its unique $G$-invariant measure class. Note that $\mathscr Z(\Gamma) < \mathscr Z(G) < P$. Set $\Lambda = \Gamma/\mathscr Z(\Gamma)$.  Consider the well-defined ergodic action $\Lambda \curvearrowright G/P$ and its associated group measure space von Neumann algebra $\rL(\Lambda \curvearrowright G/P)$.

As an application of Theorem \ref{thmA} and \cite[Theorem D]{BBH21}, we obtain the following noncommutative analogue of Margulis'\! factor theorem for all higher rank lattices (see \cite[Theorem IV.2.11]{Ma91}).

\begin{letterthm}[NFT for higher rank lattices]\label{thmB}
Keep the same notation as above. Let $\rL(\Lambda) \subset M \subset \rL(\Lambda \curvearrowright G/P)$ be an intermediate von Neumann subalgebra.

Then for every $i \in \{1, \dots, d\}$, there exists a unique parabolic $k_i$-subgroup $\mathbf P_i < \mathbf Q_i < \mathbf G_i$ such that with $Q = \prod_{i = 1}^d \mathbf Q_i(k_i)$, we have $M = \rL(\Lambda \curvearrowright G/Q)$. In particular, the rank $\sum_{i = 1}^d \rk_{k_i}(\mathbf G_i)$ is an invariant of the inclusion $\rL(\Lambda) \subset \rL(\Lambda \curvearrowright G/P)$.
\end{letterthm}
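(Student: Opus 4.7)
The plan is to split according to whether the lattice sits in a simple factor ($d = 1$) or a genuine product ($d \geq 2$), and treat each case with a different input. In the simple case $d = 1$, the hypothesis $\rk_k(\mathbf G) \geq 2$ places us exactly in the setting of the noncommutative factor theorem for lattices in higher rank simple algebraic groups, and \cite[Theorem D]{BBH21} gives the conclusion directly, matching intermediate subalgebras with intermediate parabolic $k$-subgroups $\mathbf P \subset \mathbf Q \subset \mathbf G$. For the product case $d \geq 2$, the strategy is to reduce to Theorem \ref{thmA}.

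For each $i$, pick an admissible probability measure $\mu_i \in \Prob(G_i)$. By the classical identification of Furstenberg-Poisson boundaries for isotropic semisimple algebraic groups over local fields (Furstenberg, Moore), the $(G_i, \mu_i)$-boundary is $(G_i / P_i, \nu_{B_i})$ with $\nu_{B_i}$ quasi-invariant in the unique $G_i$-invariant measure class. Setting $\mu = \prod_i \mu_i$ and invoking \cite[Corollary 3.2]{BS04}, the $(G, \mu)$-boundary is $(G/P, \nu_B)$, so the inclusion $\rL(\Lambda) \subset \rL(\Lambda \curvearrowright G/P)$ of the statement coincides with the one appearing in Theorem \ref{thmA}.

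Before invoking Theorem \ref{thmA}, verify its hypotheses on each $G_i$. The boundary freeness condition is precisely the example recorded right after the definition in the introduction. For $\mathscr{Q}\mathscr{Z}(G_i) = \mathscr{Z}(G_i)$: if $g \in G_i$ has open centralizer, then the $k_i$-points of the Zariski-closed subgroup $\mathbf Z_{\mathbf G_i}(g) < \mathbf G_i$ form a closed subgroup of $G_i$ with nonempty interior, which forces $\mathbf Z_{\mathbf G_i}(g) = \mathbf G_i$ because $\mathbf G_i$ is Zariski-connected almost $k_i$-simple, and hence $g \in \mathscr Z(G_i)$. Moreover, $\mathscr Z(\Gamma) < \prod_i \mathscr Z(G_i)$ is finite because each $\mathbf G_i$ is simply connected almost simple. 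Theorem \ref{thmA} then produces a unique decomposition $M = \rL(\Lambda \curvearrowright C)$ with $C = \prod_i C_i$ and each $C_i$ a $(G_i, \mu_i)$-boundary.

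The final step identifies each $C_i$ as a parabolic quotient of $G_i$. Being a $G_i$-equivariant measurable factor of the flag variety $G_i / P_i$, the classical classification of $\mu_i$-boundaries for isotropic semisimple algebraic groups (see \cite[Theorem IV.2.11]{Ma91} and the surrounding structure theory) yields a unique parabolic $k_i$-subgroup $\mathbf P_i \subset \mathbf Q_i \subset \mathbf G_i$ with $C_i = G_i / \mathbf Q_i(k_i)$, giving $M = \rL(\Lambda \curvearrowright G/Q)$ with $Q = \prod_i \mathbf Q_i(k_i)$. The rank invariance then follows from the parabolic lattice structure: maximal chains of parabolic $k_i$-subgroups between $\mathbf P_i$ and $\mathbf G_i$ have length $\rk_{k_i}(\mathbf G_i)$, so maximal chains of intermediate von Neumann subalgebras $\rL(\Lambda) \subset M \subset \rL(\Lambda \curvearrowright G/P)$ have length $\sum_i \rk_{k_i}(\mathbf G_i)$. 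The only substantive hurdle is this final boundary-to-parabolic identification, which is classical; everything else is bookkeeping around Theorem \ref{thmA}.
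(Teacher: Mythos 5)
Your proposal is correct and follows essentially the same route as the paper: the case $d=1$ is delegated to \cite[Theorem D]{BBH21}, and for $d \geq 2$ you verify the hypotheses of Theorem~\ref{thmA} (boundary freeness via the example after the definition, $\mathscr{Q}\mathscr{Z}(G_i) = \mathscr Z(G_i)$ via Zariski density, finiteness of $\mathscr Z(\Gamma)$), apply it to get $M = \rL(\Lambda \curvearrowright C)$ with each $C_i$ a $(G_i,\mu_i)$-boundary, and then invoke the classical classification of boundaries of isotropic semisimple groups to identify $C_i = G_i/\mathbf Q_i(k_i)$. The only difference is that the paper actually proves the more general Theorem~\ref{thm-higher-rank}, which drops simple connectedness and dense projections and therefore needs a detour through $H_1 = \overline{p_1(\Gamma)}$ and $H_2 = \overline{\widehat p_1(\Gamma)}$; under the hypotheses of Theorem~\ref{thmB} those closures are all of $G_i$, so your direct application of Theorem~\ref{thmA} is precisely the specialization of the paper's argument.
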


We actually prove in Theorem \ref{thm-higher-rank} a more general version of Theorem \ref{thmB} where the algebraic $k_i$-group $\mathbf G_i$ may not be simply connected and the lattice $\Gamma < G$ may not be embedded with dense projections. We refer to \cite[Section 6]{BBH21} and \cite[Section 5]{Ho21} for a discussion of the relevance of Theorem \ref{thmB} regarding Connes'\! rigidity conjecture for the group von Neumann algebras of higher rank lattices.

Finally, we apply Theorem \ref{thmA} to the setting of lattices in products of trees. We introduce the following notation. Let $d\geq 2$. For every $i \in \{1, \dots, d\}$, let $\mathsf T_i$ be a bi-regular tree and denote by $\Aut^+(\mathsf T_i)$ the group of bi-coloring preserving automorphisms of $\mathsf T_i$. Let $\Gamma < \prod_{i = 1}^d \Aut^+(\mathsf T_i)$ be a uniform lattice. Denote by $G_i$ the closure of the image of $\Gamma$ in $\Aut^+(\mathsf T_i)$ and assume that $G_i$ is $2$-transitive on the boundary $\partial \mathsf T_i$. Assume that $\Gamma < \prod_{i = 1}^d G_i$ is embedded with dense projections. Endow $\partial \mathsf T_i$ with its unique $G_i$-invariant measure class and $B = \prod_{i = 1}^d \partial \mathsf T_i$ with the product measure class. 

As an application of Theorem \ref{thmA}, we obtain the following noncommutative analogue of Burger-Mozes'\! factor theorem for lattices in product of trees (see \cite[Theorem 4.6]{BM00b}).

\begin{letterthm}[NFT for lattices in products of trees]\label{thmC}
Keep the same notation as above. Let $\rL(\Gamma) \subset M \subset \rL(\Gamma \curvearrowright B)$ be an intermediate von Neumann subalgebra.

Then there exists a unique subset $J \subset \{1, \dots, d\}$ such that with $B_J = \prod_{j \in J} \partial \mathsf T_j$, we have $M = \rL(\Gamma \curvearrowright B_J)$.
\end{letterthm}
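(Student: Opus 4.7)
The plan is to apply Theorem \ref{thmA} to the product $G = \prod_{i=1}^d G_i$ and the lattice $\Gamma < G$, and then to use the 2-transitivity assumption to pin down the $(G_i,\mu_i)$-boundaries that can occur in the conclusion. First I would choose admissible Borel probability measures $\mu_i \in \Prob(G_i)$ whose $(G_i,\mu_i)$-Furstenberg-Poisson boundary is $(\partial \mathsf T_i, \nu_i)$, with $\nu_i$ in the unique $G_i$-invariant measure class; this is standard random walk theory on trees, using that $G_i$ acts cocompactly on $\mathsf T_i$ (it contains the image of the uniform lattice $\Gamma$). Then $(B,\nu_B) = \prod_i(\partial \mathsf T_i,\nu_i)$ is the $(G,\mu)$-Furstenberg-Poisson boundary, matching the $B$ in the statement. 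Since $G_i$ acts 2-transitively on the infinite set $\partial \mathsf T_i$, its action on $\partial \mathsf T_i$ is faithful, so $\mathscr{Z}(G_i) = \{e\}$; consequently $\mathscr{Z}(\Gamma) = \{e\}$ and $\Lambda = \Gamma$, so the formulation of Theorem \ref{thmA} specializes to the desired statement.

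Next I would verify the two hypotheses of Theorem \ref{thmA}. For the quasi-center equality $\mathscr{Q}\mathscr{Z}(G_i) = \mathscr{Z}(G_i) = \{e\}$: any $g \in \mathscr{Q}\mathscr{Z}(G_i)$ commutes with the $G_i$-pointwise stabilizer of some finite ball in $\mathsf T_i$; by 2-transitivity of $G_i$ on $\partial \mathsf T_i$, this stabilizer is rich enough to force $g$ to act trivially on $\partial \mathsf T_i$, hence $g = e$ by faithfulness. For the boundary freeness condition, 2-transitivity of $G_i$ on $(\partial \mathsf T_i, \nu_i)$ implies that every nontrivial $G_i$-invariant measurable partition of $\partial \mathsf T_i$ is trivial, so the only nontrivial $(G_i,\mu_i)$-boundary is $\partial \mathsf T_i$ itself; it then remains to check that $\nu_i(\Fix_{\partial \mathsf T_i}(h)) = 0$ for every nontrivial $h \in G_i$. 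This reduces to a tree-dynamical argument: hyperbolic automorphisms fix only two boundary points, and the combination of 2-transitivity of $G_i$ with the density of $\widehat p_i(\Gamma)$ in the complementary factor should exclude any elliptic element of $G_i$ that pointwise fixes a half-tree.

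With the hypotheses in place, Theorem \ref{thmA} yields, for each $i$, a unique $(G_i,\mu_i)$-boundary $(C_i,\nu_{C_i})$ such that $M = \rL(\Gamma \curvearrowright \prod_i C_i)$. By 2-transitivity of $G_i$ on $\partial \mathsf T_i$, the only $G_i$-factors of $(\partial \mathsf T_i, \nu_i)$ are the trivial one and $(\partial \mathsf T_i,\nu_i)$ itself, so setting $J = \{i : C_i = \partial \mathsf T_i\}$ gives $M = \rL(\Gamma \curvearrowright B_J)$, with uniqueness of $J$ inherited directly from the uniqueness in Theorem \ref{thmA}. The main technical hurdle I expect is the boundary freeness step: a priori, elements of the closure $G_i \subset \Aut^+(\mathsf T_i)$ could pointwise fix a half-tree and thus produce a fixed set of positive $\nu_i$-measure on $\partial \mathsf T_i$; excluding such elements requires exploiting both the 2-transitivity of $G_i$ and the fact that $G_i$ is the closure of the projection of a lattice with dense projections, in the spirit of Burger-Mozes rigidity for lattices in products of trees.
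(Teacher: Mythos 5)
Your overall plan is the right one, and the first half—fixing admissible $\mu_i$ so that $(\partial\mathsf T_i,\nu_i)$ is the $(G_i,\mu_i)$-Furstenberg–Poisson boundary, observing that faithfulness on $\partial\mathsf T_i$ forces $\mathscr Z(G_i)=\{e\}$ and hence $\Lambda=\Gamma$, and arguing that $\mathscr{Q}\mathscr{Z}(G_i)=\{e\}$ by commutation with pointwise stabilizers of balls—matches the paper, which cites \cite[Lemma 3.1.1 and Proposition 3.1.2]{BM00a} for the quasi-center computation. You also correctly identify the real difficulty in the last step.

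The genuine gap is precisely there: you try to establish the boundary freeness condition \emph{for the pair $(G_i,\mu_i)$}, i.e.\ that $\nu_i(\Fix_{\partial\mathsf T_i}(h))=0$ for \emph{every} nontrivial $h\in G_i$. For the closed 2-transitive groups arising as $\overline{p_i(\Gamma)}$ this can simply be false: such groups often contain elliptic elements fixing an entire half-tree pointwise (this happens, e.g., when $G_i$ satisfies Tits' independence property, as for Burger–Mozes universal groups), and the boundary of a half-tree has positive $\nu_i$-measure. Two-transitivity on $\partial\mathsf T_i$ and being the closure of a lattice projection do not exclude this. So the hypothesis of Theorem~\ref{thmA} as stated is not available, and this step of your argument would break down.

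What saves the day—and is what the paper does—is that the boundary freeness condition on $(G_i,\mu_i)$ is used in the proof of Theorem~\ref{thmA} \emph{only} to deduce essential freeness of the nonsingular action $\Lambda\curvearrowright(D_i,\nu_{D_i})$, which is the input needed for Suzuki's theorem. In the tree setting one can verify this weaker statement directly: the lattice action $\Gamma\curvearrowright\partial\mathsf T_i$ is essentially free by \cite[Proposition 6.4]{BBHP20}, a fact that genuinely uses that $\Gamma$ sits as a lattice with dense projections in the product (not just properties of $G_i$ alone). This is why the paper says the conditions are "sufficient to apply \emph{the proof of} Theorem~\ref{thmA}" rather than Theorem~\ref{thmA} itself. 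With that substitution, the rest of your argument (2-transitivity reducing the $(G_i,\mu_i)$-boundary lattice to $\{*,\partial\mathsf T_i\}$, hence $C_i\in\{*,\partial\mathsf T_i\}$ and $J=\{i:C_i=\partial\mathsf T_i\}$) goes through.
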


{\hypersetup{linkcolor=black} \tableofcontents}

\section{Continuous elements in noncommutative boundaries}

Let $H$ be a lcsc group and $M$ a von Neumann algebra. We say that $M$ is a $H$-{\bf von Neumann algebra} if it is endowed with a continuous action $H \curvearrowright M$. Let $M$ and $N$ be $H$-von Neumann algebras. We say that a unital normal mapping $\Psi : M \to N$ is a $H$-{\bf map} if $\Psi$ is $H$-equivariant with respect to the actions $H \curvearrowright M$ and $H \curvearrowright N$. 

For $i \in \{1, 2\}$, let $G_i$ be a lcsc group and $\mu_i \in \Prob(G_i)$ an admissible Borel probability measure. Denote by $(B_i, \nu_{B_i})$ the $(G_i, \mu_i)$-Furstenberg-Poisson boundary. Set $(G, \mu) = (G_1 \times G_2, \mu_1 \otimes \mu_2)$ and $(B, \nu_B) = (B_1 \times  B_2, \nu_{B_1} \otimes \nu_{B_2})$. Denote by $p_i : G \to G_i$ the canonical homomorphism. Let $\Gamma < G$ be a lattice embedded with dense projections.

Let $M$ be a $\Gamma$-von Neumann algebra endowed with a faithful normal ucp $\Gamma$-map $\Phi : M \to \rL^\infty(B)$. Denote by $\sigma : \Gamma \curvearrowright M$ the action by automorphisms. Following \cite[Definition 5.3]{BBHP20}, an element $x \in M$ is said to be $G_1$-{\bf continuous} if for every sequence $(\gamma_n)_n$ in $\Gamma$ such that $p_1(\gamma_n) \to e$ in $G_1$, we have $\sigma_{\gamma_n}(x) \to x$ $\ast$-strongly in $M$. The subset $M_1 \subset M$ of all $G_1$-continuous elements in $M$ forms a $\Gamma$-invariant von Neumann subalgebra for which the action $\Gamma \curvearrowright M_1$ extends to a continuous action $G \curvearrowright M$ such that $G_2$ acts trivially (see \cite[Theorem 5.5]{BBHP20}).

Our main result relies on the explicit computation of the subalgebra of continuous elements in the specific context of noncommutative boundaries. More precisely, we regard $\rL(\Gamma \curvearrowright B)$ as a $\Gamma$-von Neumann algebra via the conjugation action $\Gamma \curvearrowright \rL(\Gamma \curvearrowright B)$. The canonical conditional expectation $\rE : \rL(\Gamma \curvearrowright B) \to \rL^\infty(B)$ is a faithful normal ucp $\Gamma$-map. We denote by $u_\gamma \in \rL(\Gamma) \subset \rL(\Gamma \curvearrowright B)$ for $\gamma \in \Gamma$ the unitaries implementing the action $\Gamma \curvearrowright (B, \nu_B)$.

\begin{thm}\label{thm-continuous}
For every $i \in \{1, 2\}$, the von Neumann subalgebra of $G_i$-continuous elements in $\rL(\Gamma \curvearrowright B)$ is equal to $\rL(\Gamma_i \curvearrowright B_i)$, where $\Gamma_i = p_i^{-1}(\mathscr{Q}\mathscr{Z}(G_i)) \cap  \Gamma < \Gamma$.
\end{thm}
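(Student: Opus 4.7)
By symmetry it suffices to treat $i = 1$; write $M_1$ for the subalgebra of $G_1$-continuous elements in $\rL(\Gamma \curvearrowright B)$ and set $N_1 := \rL(\Gamma_1 \curvearrowright B_1)$. I would prove the two inclusions $N_1 \subseteq M_1$ and $M_1 \subseteq N_1$ separately, with both directions hinging on the injectivity of $p_1|_\Gamma$ and the openness criterion built into $\mathscr{Q}\mathscr{Z}(G_1)$. The Fourier expansion $x = \sum_{\gamma \in \Gamma} x_\gamma u_\gamma$, with $x_\gamma = \rE(xu_\gamma^*)$ inside the crossed product $\rL^\infty(B) \rtimes \Gamma$, is the central tool.

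For the inclusion $N_1 \subseteq M_1$ I check the generators. If $f \in \rL^\infty(B_1)$, the $\Gamma$-action factors through $p_1 \colon \Gamma \to G_1$ since $G_2$ acts trivially on $B_1$, and continuity of $G_1 \curvearrowright \rL^\infty(B_1, \nu_{B_1})$ gives $\sigma_{\delta_n}(f) \to f$ $\ast$-strongly whenever $p_1(\delta_n) \to e$. If $\gamma \in \Gamma_1$, the centralizer $\mathscr Z_{G_1}(p_1(\gamma))$ is open by definition of $\mathscr{Q}\mathscr{Z}(G_1)$; any $\delta_n$ with $p_1(\delta_n) \to e$ eventually lies in $\mathscr Z_{G_1}(p_1(\gamma))$, so $p_1(\delta_n \gamma \delta_n^{-1}) = p_1(\gamma)$, and injectivity of $p_1|_\Gamma$ promotes this to $\delta_n \gamma \delta_n^{-1} = \gamma$, giving $\sigma_{\delta_n}(u_\gamma) = u_\gamma$ eventually. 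Hence $u_\gamma \in M_1$ and $N_1 \subseteq M_1$.

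The reverse direction rests on the intermediate lemma $\rL^\infty(B) \cap M_1 = \rL^\infty(B_1)$. The inclusion $\supseteq$ comes from the previous paragraph. For $\subseteq$, applying \cite[Theorem 5.5]{BBHP20} to the abelian $\Gamma$-von Neumann algebra $\rL^\infty(B)$ with $\Phi = \id$, the $G_1$-continuous subalgebra $\rL^\infty(B) \cap M_1$ carries a continuous $G$-action extending the $\Gamma$-action along which $G_2$ acts trivially; combined with the product structure of $(B, \nu_B)$ and the ergodicity of $G_2 \curvearrowright (B_2, \nu_{B_2})$, this forces the inclusion into $\rL^\infty(B)^{G_2} = \rL^\infty(B_1)$. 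Granting the lemma, part (a) of the Fourier analysis is immediate: for $x \in M_1$ and $\gamma \in \Gamma_1$, $u_\gamma \in M_1$ gives $xu_\gamma^* \in M_1$, and $\rE$ is normal and $\Gamma$-equivariant and so preserves $G_1$-continuity, hence $x_\gamma = \rE(xu_\gamma^*) \in \rL^\infty(B) \cap M_1 = \rL^\infty(B_1)$.

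The final step (b) is to show $x_\gamma = 0$ for $\gamma \notin \Gamma_1$. Here $\mathscr Z_{G_1}(p_1(\gamma))$ fails to be open, so density of $p_1(\Gamma)$ in $G_1$ combined with injectivity of $p_1|_\Gamma$ lets us pick $(\delta_n) \subset \Gamma$ with $p_1(\delta_n) \to e$ such that the conjugates $\gamma'_n := \delta_n^{-1} \gamma \delta_n$ are pairwise distinct. Reading the convergence $\sigma_{\delta_n}(x) \to x$ at the $\gamma$-th Fourier coefficient yields $\delta_n \cdot x_{\gamma'_n} \to x_\gamma$ in the appropriate $\rL^2$ norm, while a Parseval-type identity for the state $\varphi = \nu_B \circ \rE$ forces $\|x_{\gamma'_n}\|_{\rL^2((\gamma'_n)_* \nu_B)} \to 0$. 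The main obstacle will be translating this decay through the $\delta_n$-translation in order to conclude $x_\gamma = 0$: the Radon--Nikodym derivative of $\delta_n$ is well-controlled on the $B_1$-factor because $p_1(\delta_n) \to e$, but its $B_2$-component may be unbounded whenever $p_2(\delta_n) \to \infty$, and handling this asymmetry through the product decomposition $B = B_1 \times B_2$ is where the technical heart of the argument lies.
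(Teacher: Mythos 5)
Your structure matches the paper's: prove the easy inclusion $\rL(\Gamma_1 \curvearrowright B_1) \subseteq M_1$, then use the Fourier decomposition $x = \sum_\gamma x_\gamma u_\gamma$ to show that $x \in M_1$ forces $x_\gamma \in \rL^\infty(B_1)$ for $\gamma \in \Gamma_1$ and $x_\gamma = 0$ for $\gamma \notin \Gamma_1$. The first inclusion and your part (a) are essentially correct (the paper takes the intermediate identity $\rL^\infty(B) \cap M_1 = \rL^\infty(B_1)$ as a citation rather than reproving it, and then upgrades the resulting Fourier description to the crossed product $\rL(\Gamma_1 \curvearrowright B_1)$ via Suzuki's intermediate-algebra theorem, but these are minor presentational differences).

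The genuine gap is exactly where you place it, in part (b), and it cannot be repaired with the sequence you built. Choosing $(\delta_n)$ with only $p_1(\delta_n) \to e$ and pairwise distinct conjugates $\gamma'_n = \delta_n^{-1}\gamma\delta_n$ gives $\|x_{\gamma'_n}\|_{\nu_B} \to 0$ from Parseval and $\|\sigma_{\delta_n}(x_{\gamma'_n}) - x_\gamma\|_{\nu_B} \to 0$ from $\ast$-strong continuity, but these do not combine: $\|\sigma_{\delta_n}(\cdot)\|_{\nu_B}$ is not uniformly comparable to $\|\cdot\|_{\nu_B}$ because the Radon--Nikodym distortion on the $B_2$-factor is uncontrolled when $p_2(\delta_n)$ escapes to infinity. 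The paper's remedy is Lemma~\ref{lem-sequence}, a refinement of Peterson's \cite[Lemma 5.1]{Pe14}: after fixing a nonnull $E \subset B_2$ on which $\|f(\cdot)\|_{\nu_{B_1}} \geq \varepsilon$ (with $f$ the function $B_2 \to \rL^\infty(B_1)$ encoding $x_\gamma$), it produces a sequence $(\gamma_n)$ satisfying $p_1(\gamma_n) \to e$, pairwise distinct conjugates, \emph{and} the extra condition $\nu_{B_2}(p_2(\gamma_n)E) \to 1$. The proof of that lemma rests on Kakutani's random ergodic theorem for the pmp action $G_2 \curvearrowright G/\Gamma$ and martingale convergence of conditional measures on the Furstenberg--Poisson boundary, so it is not an elementary pigeonhole. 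With this third condition in hand the argument works by a lower bound rather than by transporting decay through the translation: $\liminf_n \|\sigma_{\gamma_n}(x_\gamma)\|_{\nu_B}^2 \geq \varepsilon^2$ because $p_2(\gamma_n)E$ asymptotically fills $B_2$ and $p_1(\gamma_n) \to e$ makes the $B_1$-side of the translation harmless, while $\|x_{\gamma_n\gamma\gamma_n^{-1}}\|_{\nu_B} \to 0$ by Parseval, so $\liminf_n \|u_{\gamma_n}xu_{\gamma_n}^* - x\|_\varphi^2 \geq \varepsilon^2$ contradicts $x \in M_1$. Without a substitute for the condition $\nu_{B_2}(p_2(\gamma_n)E) \to 1$ that pins down the $B_2$-translate, your step (b) does not close.
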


Note that this theorem extends \cite[Lemma 5.4]{BBHP20} to the whole group measure space von Neumann algebra $\rL(\Gamma \curvearrowright B)$. The general approach is similar to \cite[Lemma 5.4]{BBHP20} and relies on the following refinement of Peterson's result \cite[Lemma 5.1]{Pe14}. We essentially follow his proof, with a little more care to ensure our extra conditions.

\begin{lem}\label{lem-sequence}
As in the statement of Theorem \ref{thm-continuous}, set $\Gamma_1 = p_1^{-1}(\mathscr{Q}\mathscr{Z}(G_1)) \cap  \Gamma$.
Let $\gamma \in \Gamma \setminus \Gamma_1$ and $E \subset B_2$ be a nonnull measurable subset. Then there exists a sequence $(\gamma_n)_{n}$ in $\Gamma$ such that $\nu_{B_2}(p_2(\gamma_n) E) \to 1$, $p_1(\gamma_n) \to e$ and $(\gamma_n \gamma \gamma_n^{-1})_{n}$ are pairwise distinct in $\Gamma$.
\end{lem}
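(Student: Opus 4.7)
The plan is inductive. The hypothesis $p_1(\gamma)\notin \mathscr{Q}\mathscr{Z}(G_1)$ means that the centralizer $Z := \mathscr{Z}_{G_1}(p_1(\gamma))$ is a closed subgroup of $G_1$ which fails to be open, and so has empty interior, making $G_1 \setminus Z$ open and dense. Combined with the injectivity of $p_1|_\Gamma$ and the density of $p_1(\Gamma)$ in $G_1$ (both from the dense-projections hypothesis), it follows that every neighborhood of $e$ in $G_1$ contains infinitely many points of $p_1(\Gamma) \setminus Z$; more generally, one may additionally exclude any prescribed finite union of closed cosets of $Z$ not containing $e$.

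Fix a basis $(U_n)_n$ of shrinking symmetric relatively compact neighborhoods of $e$ in $G_1$. I would construct $\gamma_n \in \Gamma$ inductively so as to satisfy: (1) $p_1(\gamma_n) \in U_n$; (2) $\nu_{B_2}(p_2(\gamma_n) E) > 1-1/n$; (3) $p_1(\gamma_n) \notin Z$ and $p_1(\gamma_n) \notin p_1(\gamma_m) Z$ for every $m < n$. Condition (3) consists of avoiding finitely many closed cosets of $Z$; by the previous paragraph and the inductive choice $p_1(\gamma_m) \notin Z$, all of those cosets except $Z$ itself miss $e$, so (3) is compatible with (1) upon shrinking $U_n$ if necessary. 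Combined with the injectivity of $p_1|_\Gamma$, (3) yields $\gamma_m^{-1} \gamma_n \notin \mathscr{Z}_\Gamma(\gamma)$ for $m < n$, hence the pairwise distinctness of the conjugates $\gamma_n \gamma \gamma_n^{-1}$.

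The crux is to enforce (2) alongside (1) and (3). By the strongly approximately transitive (SAT) property of the $(G_2, \mu_2)$-Furstenberg--Poisson boundary $(B_2, \nu_{B_2})$ and the continuity of the $G_2$-action on $\rL^1(B_2, \nu_{B_2})$, the set $W_\varepsilon := \{h \in G_2 : \nu_{B_2}(h E) > 1-\varepsilon\}$ is nonempty open in $G_2$. The induction thus reduces to the following sub-lemma: for every neighborhood $U$ of $e$ in $G_1$ and every nonempty open $W \subset G_2$, the set $\mathcal{A}(U) \cap p_2^{-1}(W)$ is infinite, where $\mathcal{A}(U) := \{\delta \in \Gamma : p_1(\delta) \in U\}$. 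This is the main obstacle: it is \emph{not} a consequence of joint density of $\Gamma$ in $G$ (which fails because $\Gamma$ is discrete in the product), nor of the individual densities of $p_1(\Gamma)$ in $G_1$ and $p_2(\Gamma)$ in $G_2$. Following Peterson \cite{Pe14}, I would establish it by showing that $p_2(\mathcal{A}(U))$ is dense in $G_2$ for every neighborhood $U$ of $e$ in $G_1$, using the density of the $G_2$-orbit of $e\Gamma$ in $G/\Gamma$ (itself equivalent to the density of $p_2(\Gamma)$ in $G_2$) together with a careful parametrization exchanging the roles of the two factors; the infinitude of $\mathcal{A}(U) \cap p_2^{-1}(W)$ then follows from the openness of $W$ and the injectivity of $p_2|_\Gamma$.
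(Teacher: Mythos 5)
The reduction to the ``sub-lemma'' is the step that fails. As stated---``for every neighborhood $U$ of $e$ in $G_1$ and every nonempty open $W \subset G_2$, the set $\mathcal{A}(U) \cap p_2^{-1}(W)$ is infinite''---this is false, and the route you propose for proving it (density of $p_2(\mathcal{A}(U))$ in $G_2$) is also false. For $U$ small, $\mathcal{A}(U) = \Gamma \cap (U \times G_2)$ is a Delone-type subset of $\{e\}\times G_2$: if $\delta, \delta' \in \mathcal{A}(U)$ have $p_2(\delta)$ close to $p_2(\delta')$, then $\delta\delta'^{-1}$ is close to $e$ in $G$, which by discreteness of $\Gamma$ forces $\delta = \delta'$. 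So $p_2(\mathcal{A}(U))$ is \emph{uniformly discrete} (a model set, as in cut-and-project sets such as $\mathbb{Z}[\sqrt 2]$ in $\mathbb{R}\times\mathbb{R}$), not dense, and one can choose a relatively compact nonempty open $W$ disjoint from it. In passing, the parenthetical ``itself equivalent to the density of $p_2(\Gamma)$ in $G_2$'' is also off: density of the $G_2$-orbit of $e\Gamma$ in $G/\Gamma$ is equivalent to density of $p_1(\Gamma)$ in $G_1$. A further, independent issue: even granting the sub-lemma, your conditions (1)--(3) would not be met, since $Z = \mathscr{Z}_{G_1}(p_1(\gamma))$ contains $e$, so no amount of shrinking $U_n$ (a neighborhood of $e$) separates it from $Z$; you would still need to locate a lattice element whose $p_1$-coordinate lands in the nowhere dense set's complement \emph{inside} $U_n$, and there is no topological guarantee that $\mathcal{A}(U_n)\cap p_2^{-1}(W_\varepsilon)$ contains any such element.

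The actual argument (Peterson's, reproduced in the paper) does not try to hit a general open set in $G_2$. It applies Kakutani's random ergodic theorem to the pmp ergodic action $G_2 \curvearrowright (G/\Gamma, m)$ to get, for a.e.\ $\mu_2$-random walk trajectory $(\omega_j)$, infinitely many return times to a compact neighborhood $(\mathscr{K}_1 \times \mathscr{K}_2)\Gamma z^{-1}$, where $z$ is chosen in $\mathscr{C}\times\mathscr{K}_2$ with $\mathscr{C}$ a closed ball in $G_1$ small, near $e$, \emph{and disjoint from} $\mathscr F = \bigcup_{j\leq n} p_1(\gamma_j)Z$, and $\mathscr{K}_1$ a tiny ball at $e$ with $\mathscr{K}_1\mathscr{C}$ still disjoint from $\mathscr F$; this simultaneously encodes conditions (1) and (3) for $p_1(\gamma_{n+1})$. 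In parallel, martingale convergence of the $(G_2,\mu_2)$-boundary (the limit measures $(\nu_{B_2})_\omega$ are Dirac, cf.\ {\cite[Theorems 2.10 and 2.14]{BS04}}) shows that along a.e.\ trajectory, $\nu_{B_2}(\omega_j^{-1}\cdots\omega_0^{-1}p_2(z)E)\to 1$, i.e.\ the walk \emph{eventually stays} in (a translate of) your set $W_\varepsilon$. Intersecting the infinitely many return times with the tail on which the walk lies in $W_\varepsilon$ produces $\gamma_{n+1}$. The ``largeness'' of $W_\varepsilon$ that makes the argument work is thus dynamical (the random walk is eventually absorbed into it), not topological, and cannot be replaced by a density statement about $\mathcal{A}(U)$.
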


\begin{proof}
Set $g = p_1(\gamma) \in G_1\setminus \mathscr{Q}\mathscr{Z}(G_1)$. Then the closed subgroup $\mathscr Z_{G_1}(g)$ has empty interior in $G_1$. Since $G_1$ is a lcsc group, we may choose a compatible proper right invariant metric $d : G_1 \times G_1 \to \R_+$ (see e.g.\! \cite{St73}).

Set $\gamma_0 = e$. We construct by induction a sequence $(\gamma_n)_{n \in \N^*}$ in $\Gamma$ such that for every $n \in \N^*$, the element $\gamma_n \in \Gamma$ satisfies the following three conditions:
\begin{itemize}
\item [$(\rm i)$] $1 - \nu_{B_2}(p_2(\gamma_n) E) \leq \frac{1}{n}$, 
\item [$(\rm ii)$] $d(p_1(\gamma_n), e) \leq \frac{1}{n}$ and 
\item [$(\rm iii)$] $p_1(\gamma_{n}) \notin \bigcup_{j = 0}^{n - 1} p_1(\gamma_j)\mathscr Z_{G_1}(g)$.  
\end{itemize}
Let $n \geq 0$ and assume that we have constructed $\gamma_0, \dots, \gamma_n \in \Gamma$ satisfying the above three conditions. Let us construct $\gamma_{n + 1} \in \Gamma$ that satisfies the above three conditions. Since the action $G_2 \curvearrowright (B_2, \nu_{B_2})$ is nonsingular, we may choose a compact neighborhood $\mathscr K_2 \subset G_2$ of $e \in G_2$ such that 
$$\forall k \in \mathscr K_2, \quad \|{k}_\ast \nu_{B_2} - \nu_{B_2}\|_{1} \leq \frac{1}{2(n + 1)}.$$ 
Set $\mathscr F = \bigcup_{j = 0}^n p_1(\gamma_j)\mathscr Z_{G_1}(g)$ and note that $\mathscr F$ is a closed set with empty interior in $G_1$. Consider the open ball $B_{G_1}(e, \frac{1}{2(n + 1)})$  of center $e$ and radius $\frac{1}{2(n + 1)}$ in $G_1$. Since $B_{G_1}(e, \frac{1}{2(n + 1)}) \cap \mathscr F^c$ is a nonempty open set, we may choose a closed ball $\mathscr C = \overline B_{G_1}(o, r) \subset B_{G_1}(e, \frac{1}{2(n + 1)}) \cap \mathscr F^c$ of center $o \in G_1$ and radius $0 < r \leq \frac{1}{2(n + 1)}$ in $G_1$. Since $\mathscr C$ is compact and since $\mathscr F$ is closed, we have $\alpha = d(\mathscr C , \mathscr F) > 0$. Note that $\alpha \leq d(\mathscr C, e) \leq \frac{1}{2(n + 1)}$. Next, define $\mathscr K_1 = \overline B_{G_1}(e, \frac{\alpha}{2})$ to be the closed ball of center $e \in G_1$ and radius $\frac{\alpha}{2}$ in $G_1$. For every $y \in \mathscr F$, every $c \in \mathscr C$ and every $k \in \mathscr K_1$, we have
$$d(kc, y) \geq d(c, y) - d(kc, c) = d(c, y) - d(k, e) \geq \alpha - \frac{\alpha}{2} = \frac{\alpha}{2} > 0.$$
Thus, we have  $\mathscr K_1 \mathscr C \subset \mathscr F^c$. Moreover, for every $k \in \mathscr K_1$ and every $c \in \mathscr C$, we have
$$d(kc, e) \leq d(kc, c) + d(c, e) = d(k, e) + d(c, e) \leq \frac{\alpha}{2} + \frac{1}{2(n + 1)} \leq \frac{1}{n + 1}.$$

Denote by $m \in \Prob(G/\Gamma)$ the unique $G$-invariant Borel probability measure. Following the proof of \cite[Lemma 5.1]{Pe14}, since the pmp action $G_2 \curvearrowright (G/\Gamma, m)$ is ergodic, Kakutani's random ergodic theorem implies that for $\mu_2^{\otimes \N}$-almost every $(\omega_j)_j \in G_2^{\N}$ and $m$-almost every $z\Gamma \in G/\Gamma$, we have
$$\lim_N \frac{1}{N + 1} \sum_{j = 0}^N \mathbf 1_{(\mathscr K_1 \times \mathscr K_2)\Gamma}(\omega_j^{-1} \cdots \omega_0^{-1} z) = m((\mathscr K_1 \times \mathscr K_2)\Gamma) > 0.$$
Since $m((\mathscr C \times \mathscr K_2)\Gamma) > 0$, we may choose $z \in \mathscr C \times \mathscr K_2$ such that for $\mu_2^{\otimes \N}$-almost every $(\omega_j)_j \in G_2^{\N}$, the intersection $\{ \omega_j^{-1} \cdots \omega_0^{-1} \mid j \in \N \} \cap (\mathscr K_1 \times \mathscr K_2)\Gamma z^{-1}$ is infinite. Since $(B_2, \nu_{B_2})$ is the $(G_2, \mu_2)$-Furstenberg-Poisson boundary, for $\mu_2^{\otimes \N}$-almost every $\omega = (\omega_j)_j \in G_2^\N$, the limit measure $(\nu_{B_2})_\omega$ is a Dirac mass (see \cite[Theorem 2.14]{BS04}). Moreover, we have $\nu_{B_2} = \int_{G_2^\N} (\nu_{B_2})_\omega\, {\rm d}\mu_2^{\otimes \N}(\omega)$ (see \cite[Theorem 2.10]{BS04}). Therefore, we have 
$$\mu_2^{\otimes \N} (\{(\omega_j)_j \in G_2^\N \mid \lim_j \nu_{B_2}(\omega_j^{-1} \cdots \omega_0^{-1} p_2(z) E) = 1\}) = \nu_{B_2}(p_2(z) E) > 0.$$
We may choose $(\omega_j)_j \in G_2^{\N}$ such that the intersection $\{ \omega_j^{-1} \cdots \omega_0^{-1} \mid j \in \N \} \cap (\mathscr K_1 \times \mathscr K_2)\Gamma z^{-1}$ is infinite and $\lim_j \nu_{B_2}(\omega_j^{-1} \cdots \omega_0^{-1} p_2(z) E) = 1$. We may then choose $j \in \N$, $h \in \mathscr K_1 \times \mathscr K_2$ and $\gamma_{n + 1} \in \Gamma$ such that $\omega_j^{-1} \cdots \omega_0^{-1} = h \gamma_{n + 1} z^{-1}$ and $1 - \nu_{B_2}(\omega_j^{-1} \cdots \omega_0^{-1} p_2(z) E) \leq \frac{1}{2(n + 1)}$. Then $p_1(\gamma_{n + 1}) = p_1(h^{-1})p_1(z) \in \mathscr K_1 \mathscr C$ and so $p_1(\gamma_{n + 1}) \notin \bigcup_{j = 0}^n p_1(\gamma_j)\mathscr Z_{G_1}(g)$ and $d(p_1(\gamma_{n + 1}), e) \leq \frac{1}{n + 1}$. Moreover, we have 
\begin{align*}
1 - \nu_{B_2}(p_2(\gamma_{n + 1}) E) &= 1 - \nu_{B_2}(p_2(h^{-1}) \omega_j^{-1} \cdots \omega_0^{-1} p_2(z) E)  \\
&= 1 - \nu_{B_2}(\omega_j^{-1} \cdots \omega_0^{-1} p_2(z) E) + \|p_2(h)_\ast \nu_{B_2} - \nu_{B_2}\|_1 \\
&\leq \frac{1}{2(n + 1)} + \frac{1}{2(n + 1)} = \frac{1}{n + 1}.
\end{align*}
Thus, the element $\gamma_{n + 1} \in \Gamma$ satisfies the above three conditions. 

The sequence $(\gamma_n)_n$ that we have constructed by induction satisfies the conclusion of the lemma.
\end{proof}

Let $(X, \nu)$ be a standard probability space. For every measurable function $f : X \to \C$, we set 
$$\|f\|_{\nu} = \left( \int_X |f(x)|^2 \, {\rm d}\nu(x)\right)^{1/2}.$$

\begin{proof}[Proof of Theorem \ref{thm-continuous}]
We may assume that $i = 1$. Set $\mathscr B = \rL(\Gamma \curvearrowright B)$ and denote by $\mathscr B_1 \subset \mathscr B$ the von Neumann subalgebra of all $G_1$-continuous elements in $\mathscr B$. Set $\varphi = \nu_B \circ \rE \in \mathscr B_\ast$. 

Observe that $\rL(\Gamma_1 \curvearrowright B_1) \subset \mathscr B_1$. Indeed, it is obvious that $\rL^\infty(B_1) \subset \mathscr B_1$. Let now $\gamma \in \Gamma_1$ so that $p_1(\gamma) \in \mathscr{Q}\mathscr{Z}(G_1)$. Let $(\gamma_n)_n$ be a sequence in $\Gamma$ such that $p_1(\gamma_n) \to e$. Since $\mathscr Z_{G_1}(p_1(\gamma)) < G_1$ is open, there exists $n_0 \in \N$ such that $p_1(\gamma_n) \in \mathscr Z_{G_1}(p_1(\gamma))$ for every $n \geq n_0$. Since $p_1|_\Gamma : \Gamma \to G_1$ is injective, we have $\gamma_n \in \mathscr Z_\Gamma(\gamma)$ for every $n \geq n_0$. This implies that $u_\gamma \in \mathscr B_1$. Altogether, we obtain $\rL(\Gamma_1 \curvearrowright B_1) \subset \mathscr B_1$.

Next, we prove that $\mathscr B_1 \subset \rL(\Gamma_1 \curvearrowright B_1)$. Firstly, we show that $\mathscr B_1 \subset \rL(\Gamma_1 \curvearrowright B)$. By contraposition, let $x \in \mathscr B \setminus\rL(\Gamma_1 \curvearrowright B)$. Write $x = \sum_{\gamma \in \Gamma} x_\gamma u_\gamma$ for the Fourier expansion of $x \in \mathscr B$ where $x_\gamma = \rE(xu_\gamma^*)$ for every $\gamma \in \Gamma$. Since $x \not\in \rL(\Gamma_1 \curvearrowright B)$, there exists $\gamma \in \Gamma \setminus \Gamma_1$ such that $x_\gamma \neq 0$. We may regard $x_\gamma \in \rL^\infty(B) = \rL^\infty(B_2, \rL^\infty(B_1))$ as a measurable function $f : B_2 \to \rL^\infty(B_1)$. Since $x_\gamma \neq 0$, the measurable function $f : B_2 \to \rL^\infty(B_1)$ possesses a nonzero essential value $y \in \rL^\infty(B_1)$. Set $\varepsilon = \|y\|_{\nu_{B_1}}/2 > 0$. Then the measurable subset $E = \{b \in B_2 \mid \|f(b) - y\|_{\nu_{B_1}} < \varepsilon\}$ is nonnull. Choose a sequence $(\gamma_n)_n$ in $\Gamma$ that satisfies the conclusion of Lemma \ref{lem-sequence} for $\gamma \in \Gamma \setminus \Gamma_1$ and $E \subset B_2$. Then 
\begin{align*}
\forall n \in \N, \quad \|u_{\gamma_n} x u_{\gamma_n}^* - x\|_\varphi^2 &= \sum_{h \in \Gamma} \|\sigma_{\gamma_n}(x_h) - x_{\gamma_n h \gamma_n^{-1}}\|_{\nu_B}^2 \\
& \geq \|\sigma_{\gamma_n}(x_\gamma) - x_{\gamma_n \gamma \gamma_n^{-1}}\|_{\nu_B}^2.
\end{align*}
On the one hand, since the elements $(\gamma_n \gamma \gamma_n^{-1})_n$ are pairwise distinct in $\Gamma$ and since $\|x\|_\varphi^2 = \sum_{h \in \Gamma} \|x_h\|_{\nu_B}^2 \geq \sum_{n \in \N} \|x_{\gamma_n \gamma \gamma_n^{-1}}\|_{\nu_B}^2$, we have 
$$\lim_n \|x_{\gamma_n \gamma \gamma_n^{-1}}\|_{\nu_B}^2 = 0.$$ 
On the other hand, since $p_1(\gamma_n) \to e$, since $\nu_{B_2}(p_2(\gamma_n) E) \to 1$ and since $\|f(b)\|_{\nu_{B_1}} \geq \|y\|_{\nu_{B_1}} - \|f(b) - y\|_{\nu_{B_1}} \geq \varepsilon$ for every $b \in E$, we have
\begin{align*}
\liminf_n\|\sigma_{\gamma_n}(x_\gamma)\|_{ \nu_B}^2 &= \liminf_n \int_{B_2} \|\sigma_{p_1(\gamma_n)}(f(p_2(\gamma_n)^{-1} b))\|^2_{\nu_{B_1}}  {\rm d}\nu_{B_2}(b) \\
&= \liminf_n \int_{B_2} \|f(p_2(\gamma_n)^{-1} b)\|^2_{\nu_{B_1} \circ p_1(\gamma_n)} \, {\rm d}\nu_{B_2}(b) \\
&\geq \liminf_n \int_{p_2(\gamma_n) E} \|f(p_2(\gamma_n)^{-1} b)\|^2_{\nu_{B_1} \circ p_1(\gamma_n)} \, {\rm d}\nu_{B_2}(b) \geq \varepsilon^2.
\end{align*}
Altogether, this implies that $\liminf_n \|u_{\gamma_n} x u_{\gamma_n}^* - x\|_\varphi^2 \geq \varepsilon^2$ and so $x \not\in \mathscr B_1$. This shows that $\mathscr B_1 \subset \rL(\Gamma_1 \curvearrowright B)$. 

Secondly, we show that $\mathscr B_1 \subset \rL(\Gamma_1 \curvearrowright B_1)$. Indeed, let $x \in \mathscr B_1$. The previous paragraph shows that $x \in \rL(\Gamma_1 \curvearrowright B)$ and so we may write $x = \sum_{\gamma \in \Gamma_1} x_\gamma u_\gamma$ where $x_\gamma = \rE(x u_\gamma^*)$ for every $\gamma \in \Gamma_1$. Since the faithful normal conditional expectation $\rE : \mathscr B \to \rL^\infty(B)$ is $\Gamma$-equivariant, we have that $\rE(\mathscr B_1)$ is contained in the von Neumann subalgebra of $G_1$-continuous elements in $\rL^\infty(B)$, which is equal to $\rL^\infty(B_1)$ by \cite[Lemma 5.4]{BBHP20}. Since $\rL(\Gamma_1) \subset \mathscr B_1$, we have $x_\gamma = \rE(x u_\gamma^*) \in \rL^\infty(B_1)$ for every $\gamma \in \Gamma_1$. Then \cite[Corollary 3.4]{Su18} further implies that $\mathscr B_1 \subset \rL(\Gamma_1 \curvearrowright B_1)$. Thus, we have $\mathscr B_1 = \rL(\Gamma_1 \curvearrowright B_1)$.
\end{proof}

Next, we further assume that $\mathscr{Q}\mathscr{Z}(G_i) = \mathscr Z(G_i)$ for every $i \in \{1, 2\}$ and that $\mathscr Z(\Gamma)$ is finite. Set $\Lambda = \Gamma/\mathscr Z(\Gamma)$ and denote by $\pi : \Gamma \to \Lambda$ the quotient homomorphism. Consider the well-defined ergodic action $\Lambda \curvearrowright (B, \nu_B)$. We may regard $\rL(\Lambda \curvearrowright B)$ as a $\Gamma$-von Neumann algebra via the quotient homomorphism $\pi : \Gamma \to \Lambda$ and the conjugation action $\Lambda \curvearrowright \rL(\Lambda \curvearrowright B)$. Moreover, the canonical conditional expectation $\rE : \rL(\Lambda \curvearrowright B) \to \rL^\infty(B)$ is a faithful normal ucp $\Gamma$-map. We derive the following result that will be used in the proof of Theorem \ref{thmA}.

\begin{cor}\label{cor-ergodic}
For every $i \in \{1, 2\}$, the von Neumann subalgebra of $G_i$-continuous elements in $\rL(\Lambda \curvearrowright B)$ is equal to $\rL^\infty(B_i)$. Moreover, the action $\Gamma \curvearrowright \rL(\Lambda \curvearrowright B)$ is ergodic.
\end{cor}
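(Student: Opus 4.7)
The plan is to repeat the proof of Theorem \ref{thm-continuous}, now applied to $\rL(\Lambda \curvearrowright B)$ with its Fourier expansion over $\Lambda$. The strengthened hypothesis $\mathscr{Q}\mathscr{Z}(G_i) = \mathscr Z(G_i)$ will force the analogous continuous subalgebra to collapse from $\rL(\Gamma_i \curvearrowright B_i)$ down to $\rL^\infty(B_i)$, after which the ergodicity assertion is a short product-boundary tensor argument.

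By symmetry I treat $i = 1$ only. Let $\mathscr A = \rL(\Lambda \curvearrowright B)$ and let $\mathscr A_1 \subset \mathscr A$ be its subalgebra of $G_1$-continuous elements. The inclusion $\rL^\infty(B_1) \subset \mathscr A_1$ is immediate: for $f \in \rL^\infty(B_1)$ and $p_1(\gamma_n) \to e$ we have $u_{\pi(\gamma_n)} f u_{\pi(\gamma_n)}^\ast = p_1(\gamma_n) \cdot f \to f$ $\ast$-strongly by continuity of $G_1 \curvearrowright \rL^\infty(B_1)$. For the reverse inclusion I first record the key collapse: for any $\gamma \in \Gamma_1 = p_1^{-1}(\mathscr Z(G_1)) \cap \Gamma$ and any $\gamma' \in \Gamma$, one has $p_1([\gamma,\gamma']) = [p_1(\gamma),p_1(\gamma')] = e$, so injectivity of $p_1|_\Gamma$ forces $[\gamma,\gamma'] = e$; hence $\Gamma_1 \subset \mathscr Z(\Gamma)$ and $\pi(\Gamma_1) = \{e\}$ in $\Lambda$. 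Now fix $x \in \mathscr A_1$ and expand $x = \sum_{\lambda \in \Lambda} x_\lambda u_\lambda$ with $x_\lambda = \rE(xu_\lambda^\ast)$. Since $\rE : \mathscr A \to \rL^\infty(B)$ is faithful normal and $\Gamma$-equivariant, $x_e = \rE(x)$ is $G_1$-continuous inside $\rL^\infty(B)$ and so lies in $\rL^\infty(B_1)$ by \cite[Lemma 5.4]{BBHP20}. It remains to see that $x_\lambda = 0$ for every $\lambda \neq e$. Arguing by contradiction, pick any lift $\gamma \in \Gamma$ of such a $\lambda$; since $\lambda \neq e$ we have $\gamma \notin \mathscr Z(\Gamma) \supset \Gamma_1$, so Lemma \ref{lem-sequence} produces a sequence $(\gamma_n)$ in $\Gamma$ with $p_1(\gamma_n) \to e$, $\nu_{B_2}(p_2(\gamma_n) E) \to 1$ (where $E \subset B_2$ is a nonnull set witnessing a nonzero essential value $y \in \rL^\infty(B_1)$ of $x_\lambda$), and $(\gamma_n \gamma \gamma_n^{-1})_n$ pairwise distinct in $\Gamma$. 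Finiteness of $\mathscr Z(\Gamma)$ permits a pigeonhole extraction ensuring that $(\pi(\gamma_n)\lambda\pi(\gamma_n)^{-1})_n$ is pairwise distinct in $\Lambda$, after which the exact two-sided estimate from the proof of Theorem \ref{thm-continuous} gives $\liminf_n \|u_{\pi(\gamma_n)} x u_{\pi(\gamma_n)}^\ast - x\|_\varphi^2 \geq \|y\|_{\nu_{B_1}}^2/4 > 0$, contradicting $G_1$-continuity of $x$.

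For the ergodicity of $\Gamma \curvearrowright \mathscr A$, any $\Gamma$-fixed $x \in \mathscr A$ is trivially both $G_1$- and $G_2$-continuous, so by the first part $x \in \rL^\infty(B_1) \cap \rL^\infty(B_2)$; inside $\rL^\infty(B) = \rL^\infty(B_1) \ovt \rL^\infty(B_2)$ this intersection reduces to $\C 1$, yielding scalarity. The only mildly delicate step in the whole argument is the pigeonhole extraction, which is routine thanks to the finiteness of $\mathscr Z(\Gamma)$; everything else reproduces verbatim the computations already carried out in the proof of Theorem \ref{thm-continuous}.
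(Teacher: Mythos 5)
Your proof is correct and reaches the same conclusion, but by a genuinely different route from the paper. The paper proves the first assertion by constructing a $\Gamma$-equivariant von Neumann algebra isomorphism $\Theta : \rL(\Lambda \curvearrowright B) \to z\rL(\Gamma \curvearrowright B)$, where $z = \frac{1}{|\mathscr Z(\Gamma)|}\sum_{h \in \mathscr Z(\Gamma)} u_h$ is the central averaging projection, and then applies Theorem~\ref{thm-continuous} as a black box, reading off the $G_i$-continuous part of the corner $z\rL(\Gamma \curvearrowright B)$ as $z\rL(\mathscr Z(\Gamma)\curvearrowright B_i) = z\rL^\infty(B_i)$. You instead re-run the Fourier-coefficient estimate from the proof of Theorem~\ref{thm-continuous} directly over $\Lambda$; this requires the additional pigeonhole extraction you flag, using finiteness of $\mathscr Z(\Gamma)$ to pass from pairwise distinct conjugates $(\gamma_n\gamma\gamma_n^{-1})_n$ in $\Gamma$ to pairwise distinct conjugates in $\Lambda$ (each $\Lambda$-fiber has size at most $|\mathscr Z(\Gamma)|$, so one picks representatives). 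In exchange, your direct route bypasses the appeal to Suzuki's result (\cite[Corollary 3.4]{Su18}) that closes the second step of Theorem~\ref{thm-continuous}, because the identity $\Gamma_1 = p_1^{-1}(\mathscr Z(G_1))\cap\Gamma \subset \mathscr Z(\Gamma)$ forces the surviving Fourier coefficients over $\Lambda$ to collapse to $x_e$ alone, so the conclusion becomes $\mathscr A_1 \subset \rL^\infty(B_1)$ with no crossed product left to analyze. The paper's $\Theta$-transfer is shorter and avoids re-verifying estimates; your version is more self-contained and illustrates concretely why the center disappears. The ergodicity deduction is the same in both.
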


\begin{proof}
The second assertion follows from the first one since a $\Gamma$-invariant element in $\rL(\Lambda \curvearrowright B)$ is both $G_1$-continuous and $G_2$-continuous, hence must be contained in $\rL^\infty(B_1) \cap \rL^\infty(B_2) = \C 1$.

To prove the first assertion, we observe that with the above notation, $\rL(\Lambda \curvearrowright B) \cong z \rL(\Gamma \curvearrowright B)$, where $z = \frac{1}{|\mathscr Z(\Gamma)|} \sum_{h \in \mathscr Z(\Gamma)} u_h \in \mathscr Z(\rL(\Gamma \curvearrowright B))$. Let us be more explicit about this identification. Since $\mathscr Z(\Gamma)$ acts trivially on $B$, the projection $z$ is indeed central in $\rL(\Gamma \curvearrowright B)$. Given $g \in \Gamma$, the element $zu_g$ only depends on $\pi(g) \in \Lambda$. Then the map 
$$\rL(\Lambda \curvearrowright B) \to z\rL(\Gamma \curvearrowright B) : a u_{\pi(g)}  \mapsto zau_g, \quad   a \in \rL^\infty(B), g \in \Gamma,$$
is easily seen to extend to the desired von Neumann algebra isomorphism $\Theta : \rL(\Lambda \curvearrowright B) \to z\rL(\Gamma \curvearrowright B)$. Note that $\Theta$ is $\Gamma$-equivariant. 

Let $i \in \{1,2\}$. Since $\mathscr{Q}\mathscr{Z}(G_i) = \mathscr Z(G_i)$ and since $p_i|_\Gamma : \Gamma \to G_i$ is injective, we have $\Gamma_i = p_i^{-1}( \mathscr Z(G_i)) = \mathscr Z(\Gamma)$. It is obvious that all the elements of $\rL^\infty(B_i)$ are $G_i$-continuous in $\rL(\Lambda \curvearrowright B)$. Conversely, let $x \in \rL(\Lambda \curvearrowright B)$ be a $G_i$-continuous element. Then $\Theta(x) \in z\rL(\Gamma \curvearrowright B)$ is $G_i$-continuous in $\rL(\Gamma \curvearrowright B)$. Applying Theorem \ref{thm-continuous}, we obtain that $\Theta(x) \in \rL(\mathscr Z(\Gamma) \curvearrowright B_i) \cap z\rL(\Gamma \curvearrowright B) = z \rL^\infty(B_i)$. Thus, we have $x \in \rL^\infty(B_i)$. This finishes the proof of the corollary.
\end{proof}

\section{Proofs of the main results}

\begin{proof}[Proof of Theorem \ref{thmA}]
Regard $\mathscr B = \rL(\Lambda \curvearrowright B)$ as a $\Gamma$-von Neumann algebra and denote by $\rE : \mathscr B \to \rL^\infty(B)$ the canonical $\Gamma$-equivariant faithful normal conditional expectation. By Corollary \ref{cor-ergodic}, the action $\Gamma \curvearrowright \mathscr B$ is ergodic. Let $\rL(\Lambda) \subset M \subset \mathscr B$ be an intermediate von Neumann subalgebra. Then $M \subset\mathscr B$ is a $\Gamma$-invariant von Neumann subalgebra and the action $\Gamma \curvearrowright M$ is ergodic. Consider the faithful normal ucp $\Gamma$-map $\Phi = \rE|_M : M \to \rL^\infty(B)$. If $\Phi : M \to \rL^\infty(B)$ is invariant, then for every $x \in M$ and every $\lambda \in \Lambda$, we have $\rE(x u_\lambda^*) \in \C 1$. Then we infer that $M = \rL(\Lambda)$ (see e.g.\! \cite[Corollary 3.4]{Su18}).

Next, assume that $\Phi : M \to \rL^\infty(B)$ is not invariant. Then \cite[Theorem 5.8]{BBHP20} implies that there exists $i \in \{1, \dots, d\}$ such that the $\Gamma$-invariant von Neumann subalgebra $M_i \subset M$ of all $G_i$-continuous elements in $M$ is nontrivial. Moreover, the action $\Gamma \curvearrowright M_i$ extends to a continuous action $G_i \curvearrowright M_i$ and the ucp map $\Phi|_{M_i} : M_i \to \rL^\infty(B_i)$ is $G_i$-equivariant and not invariant. Since $M \subset \mathscr B$ is $\Gamma$-invariant, we have $M_i \subset \mathscr B_i$ where $\mathscr B_i \subset \mathscr B$ is the $\Gamma$-invariant von Neumann subalgebra of all $G_i$-continuous elements in $\mathscr B$. Corollary \ref{cor-ergodic} implies that $M_i \subset \rL^\infty(B_i)$. Then $M_i \subset \rL^\infty(B_i)$ is a nontrivial $G_i$-invariant von Neumann subalgebra and so there exists a nontrivial $(G_i, \mu_i)$-boundary $(D_i, \nu_{D_i})$ such that $M_i = \rL^\infty(D_i)$. Then we have $\rL(\Lambda \curvearrowright D_i) \subset M \subset \mathscr B$. Since the pair $(G_i, \mu_i)$ satisfies the boundary freeness condition and since the restriction $p_i |_\Gamma : \Gamma \to G_i$ is injective, it follows that the nonsingular action $\Lambda \curvearrowright (D_i, \nu_{D_i})$ is essentially free. Then a combination of Bader-Shalom's factor theorem \cite[Theorem 1.7]{BS04} and Suzuki's result \cite[Theorem 3.6]{Su18} implies that for every $j \in \{1, \dots, d\}$, there exists a unique $(G_j, \mu_j)$-boundary $(C_j, \nu_j)$ such that with $(C, \nu_C)  = \prod_{j = 1}^d (C_j, \nu_{C_j})$, we have $M = \rL(\Lambda \curvearrowright C)$.
\end{proof}

\begin{rem}
We point out that the analogous statement of Theorem \ref{thmA} for $\rL(\Gamma \curvearrowright B)$ is false in the case when $\mathscr Z(\Gamma)$ is nontrivial. Indeed, the presence of the central projection $z \in \mathscr Z(\rL(\Gamma \curvearrowright B))$ as defined in the proof of Corollary \ref{cor-ergodic} produces pathological intermediate subalgebras such as $z \rL(\Gamma) \oplus (1-z)\rL(\Gamma \curvearrowright B)$. Nevertheless, our strategy still allows to classify intermediate subalgebras in this case.
\end{rem}

Next, we prove a general noncommutative factor theorem for higher rank lattices that will imply Theorem \ref{thmB}. 

We introduce the following notation. Let $d \geq 1$. For every $i \in \{1, \dots, d\}$, let $k_i$ be a local field and $\mathbf G_i$ a $k_i$-isotropic almost $k_i$-simple linear algebraic $k_i$-group and set $G_i = \mathbf G_i(k_i)$. Denote by $G_i^+ < G_i$ the subgroup generated by the subgroups $\mathbf U(k_i)$ where $\mathbf U$ runs through the set of unipotent $k_i$-split subgroups of $\mathbf G_i$ (see \cite[Proposition I.5.4 $(\rm i)$]{Ma91}). Choose a minimal parabolic $k_i$-subgroup $\mathbf P_i < \mathbf G_i$ and set $P_i = \mathbf P_i(k_i)$. 
Set $G = \prod_{i = 1}^d G_i$, $G^+ = \prod_{i = 1}^d G_i^+$, $P = \prod_{i = 1}^d P_i$ and endow the homogeneous space $G/P$ with its unique $G$-invariant measure class. Observe that we have $G = G^+ \cdot P$ and so the action $G^+ \curvearrowright G/P$ is transitive (see \cite[Proposition I.5.4$(\rm vi)$]{Ma91}). For every $i \in \{1, \dots, d\}$, denote by $p_i : G \to G_i$ and by $\widehat p_i : G \to \prod_{j \neq i} G_j$ the canonical homomorphisms.  

Let $\Gamma < G$ be a lattice such that for every $i \in \{1, \dots, d\}$, we have $\prod_{j \neq i} G_j^+ < \overline{\widehat p_i(\Gamma)}$ and $p_i|_\Gamma : \Gamma \to G_i$ is injective. We point out that $\Gamma < G$ need not be embedded with dense projections. Note that $\mathscr Z(\Gamma) < \mathscr Z(G) < P$ (see \cite[Lemma II.6.3 $(\rm I)$]{Ma91}). Set $\Lambda = \Gamma/\mathscr Z(\Gamma)$ and denote by $\pi : \Gamma \to \Lambda$ the quotient homomorphism. Consider the well-defined ergodic action $\Lambda \curvearrowright G/P$ and its associated group measure space von Neumann algebra $\rL(\Lambda \curvearrowright G/P)$. We may regard $\rL(\Lambda \curvearrowright G/P)$ as a $\Gamma$-von Neumann algebra via the quotient homomorphism $\pi : \Gamma \to \Lambda$ and the conjugation action $\Lambda \curvearrowright \rL(\Lambda \curvearrowright G/P)$. 

\begin{lem}\label{lem-conjugation}
The action $\Gamma \curvearrowright \rL(\Lambda \curvearrowright G/P)$ is ergodic.
\end{lem}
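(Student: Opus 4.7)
The strategy is to extend Corollary \ref{cor-ergodic} from the dense-projections setting to the present algebraic framework, where denseness of $\widehat p_i(\Gamma)$ is only assumed up to $G^+$. Let $x \in \rL(\Lambda \curvearrowright G/P)^\Gamma$; the goal is to show $x \in \C 1$. The first observation is that for each $i \in \{1, \dots, d\}$ and every sequence $(\gamma_n)_n \subset \Gamma$ with $p_i(\gamma_n) \to e$ in $G_i$, the identity $\sigma_{\gamma_n}(x) = x$ holds tautologically, so $x$ is $G_i$-continuous in the sense of \cite[Definition 5.3]{BBHP20}.

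The heart of the argument is to show that the $G_i$-continuous elements of $\rL(\Lambda \curvearrowright G/P)$ are contained in $\rL^\infty(B_i)$, where $B_i = G_i/P_i$ is viewed as a subfactor of $\rL^\infty(G/P) = \rL^\infty(\prod_i B_i)$. I would adapt the proof of Theorem \ref{thm-continuous}, and in particular the Peterson-style Lemma \ref{lem-sequence}, to the quasi-density hypothesis $\prod_{j \neq i} G_j^+ \subset \overline{\widehat p_i(\Gamma)}$. The random ergodic theorem invoked there still produces the requisite sequence $(\gamma_n)_n$, because Moore's ergodicity ensures that the noncompact subgroup $\prod_{j \neq i} G_j^+$ — hence \emph{a fortiori} $\overline{\widehat p_i(\Gamma)}$ — acts ergodically on $G/\Gamma$. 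The algebraic assumption that $\mathbf G_i$ is $k_i$-isotropic and almost $k_i$-simple yields $\mathscr{QZ}(G_i) = \mathscr Z(G_i)$, and, since $\mathscr Z(\Gamma) < \mathscr Z(G)$ is killed in the quotient $\Lambda = \Gamma/\mathscr Z(\Gamma)$, the same identification $\rL(\Lambda \curvearrowright G/P) \cong z \rL(\Gamma \curvearrowright G/P)$ as in Corollary \ref{cor-ergodic} reduces the computation to showing that $G_i$-continuous elements of $\rL(\Gamma \curvearrowright G/P)$ lie in $\rL(\mathscr Z(\Gamma) \curvearrowright B_i)$, which then collapses to $\rL^\infty(B_i)$ in the quotient.

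Intersecting over $i$, the $\Gamma$-fixed element $x$ thus lies in $\bigcap_{i=1}^d \rL^\infty(B_i) \subset \rL^\infty(G/P)$. In the product case $d \geq 2$, the product structure $G/P = \prod_i B_i$ immediately forces this intersection to equal $\C 1$. In the simple case $d = 1$, the intersection is all of $\rL^\infty(G/P)$, but $x$ is then a $\Gamma$-invariant essentially bounded function on $G/P$, hence a constant by Moore's ergodicity theorem applied to the lattice $\Gamma < \mathbf G(k)$ in a $k$-isotropic almost $k$-simple linear algebraic group. Either way, $x \in \C 1$.

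The main technical obstacle is verifying that Lemma \ref{lem-sequence} still produces the desired sequence under the weaker quasi-density assumption: one must check that the inductive construction avoiding the centralizers $\mathscr Z_{G_i}(g)$ can be carried out using ergodicity of the subgroup $\prod_{j \neq i} G_j^+ \curvearrowright G/\Gamma$ in place of that of $\prod_{j \neq i} G_j$, and that the appropriate conditional expectation formulas remain $\Gamma$-equivariant after passage through the central projection $z$. Once this adaptation of Theorem \ref{thm-continuous} is in place, the remainder of the argument transcribes almost verbatim from the proof of Corollary \ref{cor-ergodic}.
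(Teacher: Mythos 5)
Your approach diverges substantially from the paper's, and it contains a genuine gap that makes it fail in the $d=1$ case.

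The paper proves Lemma~\ref{lem-conjugation} by a short, direct argument that has nothing to do with continuous elements: one chooses an admissible $\mu \in \Prob(G)$, observes that $G/P$ carries a $\mu$-stationary measure $\nu$ in the $G$-invariant measure class (\cite[Lemma 1.1]{NZ97}), transfers stationarity from $G$ to $\Gamma$ via \cite[Proposition VI.4.1]{Ma91}, pushes forward to a $\mu_\Lambda$-stationary measure on $G/P$, and then invokes \cite[Lemma 2.6]{KP21}: for an ICC group $\Lambda$ and a $\mu_\Lambda$-stationary $\Lambda$-space, the conjugation action $\Lambda \curvearrowright \rL(\Lambda \curvearrowright G/P)$ is ergodic. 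This sidesteps the entire Peterson-type machinery of Lemma~\ref{lem-sequence} and Theorem~\ref{thm-continuous}.

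The gap in your proposal is the $d=1$ case. The notion of $G_i$-continuity is only meaningful when $G$ has a nontrivial product decomposition: for $d=1$ one has $p_1 = \id$, and a sequence $(\gamma_n)_n \subset \Gamma$ with $p_1(\gamma_n) \to e$ in $G$ is eventually constant equal to $e$ by discreteness, so \emph{every} element of $\rL(\Lambda \curvearrowright G/P)$ is vacuously $G_1$-continuous. Your claim that the $G_1$-continuous elements land in $\rL^\infty(B_1) = \rL^\infty(G/P)$ is therefore false; in particular a $\Gamma$-invariant $x$ is not forced into $\rL^\infty(G/P)$, and Moore ergodicity cannot be applied. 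Since Theorem~\ref{thm-higher-rank} treats the rank-$\geq 2$ simple case $d=1$ and invokes Lemma~\ref{lem-conjugation} there, this is not a corner case that can be waved away.

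For $d\geq 2$ your sketch is plausible in spirit, but it rests on an unverified adaptation of Lemma~\ref{lem-sequence} to the quasi-density hypothesis (replacing $B_2$ by the Furstenberg--Poisson boundary of $H_2 = \overline{\widehat p_i(\Gamma)}$, checking ergodicity of $H_2 \curvearrowright G/\Gamma$ via Howe--Moore for $G_j^+$, etc.), which you flag but do not carry out; the paper deliberately postpones this kind of reduction to the proof of Theorem~\ref{thm-higher-rank} and keeps Lemma~\ref{lem-conjugation} self-contained. In short: the paper's stationary-measure argument is both shorter and covers all $d\geq 1$ uniformly, while your route would need a separate argument for $d=1$ and a full reworking of the Peterson lemma for $d\geq 2$.
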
 

\begin{proof}
Choose a Borel probability measure $\mu \in \Prob(G)$ of the form $\mu = \psi \cdot m_G$ where $m_G$ is a Haar measure on $G$ and $\psi : G \to \R_+$ is a compactly supported continuous function such that the set $\{g \in G \mid \psi(g) > 0 \text{ and } \psi(g^{-1}) > 0\}$ generates $G$. Since $G/P$ is compact, there exists a Borel probability measure $\nu \in \Prob(G/P)$ such that $(G/P, \nu)$ is a $(G, \mu)$-space. Moreover, $\nu\in \Prob(G/P)$ belongs to the unique $G$-invariant measure class (see e.g.\! \cite[Lemma 1.1]{NZ97}). 

By \cite[Proposition VI.4.1]{Ma91}, there exists a fully supported probability measure $\mu_\Gamma \in \Prob(\Gamma)$ such that $(G/P, \nu)$ is a $(\Gamma, \mu_\Gamma)$-space. Denote by $\mu_\Lambda \in \Prob(\Lambda)$ the pushforward measure of $\mu_\Gamma$ under the quotient homomorphism $\pi : \Gamma \to \Lambda$. Then $(G/P, \nu)$ is a $(\Lambda, \mu_\Lambda)$-space. Since $\Lambda$ has infinite conjugacy classes, \cite[Lemma 2.6]{KP21} implies that the conjugation action $\Lambda \curvearrowright \rL(\Lambda \curvearrowright G/P)$ is ergodic. Thus, the action $\Gamma \curvearrowright \rL(\Lambda \curvearrowright G/P)$ is ergodic.
\end{proof}

The following theorem is a noncommutative analogue of Margulis'\! factor theorem for all higher rank lattices (see \cite[Theorem IV.2.11]{Ma91}). It is a more general version of Theorem \ref{thmB}.

\begin{thm}\label{thm-higher-rank}
Keep the same notation as above. Let $\rL(\Lambda) \subset M \subset \rL(\Lambda \curvearrowright G/P)$ be an intermediate von Neumann subalgebra.

Then for every $i \in \{1, \dots, d\}$, there exists a unique parabolic $k_i$-subgroup $\mathbf P_i < \mathbf Q_i < \mathbf G_i$ such that with $Q = \prod_{i = 1}^d \mathbf Q_i(k_i)$, we have $M = \rL(\Lambda \curvearrowright G/Q)$. In particular, the rank $\sum_{i = 1}^d \rk_{k_i}(\mathbf G_i)$ is an invariant of the inclusion $\rL(\Lambda) \subset \rL(\Lambda \curvearrowright G/P)$.
\end{thm}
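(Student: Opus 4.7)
The plan is to separate the argument by the higher rank dichotomy. In the simple case $d = 1$, we have $\rk_{k_1}(\mathbf{G}_1) \geq 2$ and the desired classification is exactly the content of \cite[Theorem D]{BBH21}, the noncommutative Margulis factor theorem for lattices in a single higher rank almost simple algebraic group; nothing further is required. I therefore concentrate on the product case $d \geq 2$ and aim to reduce it to Theorem \ref{thmA}.

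For Theorem \ref{thmA} to be applicable, I first realize $(G/P, \nu_{G/P})$ as the $(G, \mu)$-Furstenberg--Poisson boundary by picking admissible $\mu_i \in \Prob(G_i)$ and setting $\mu = \bigotimes_i \mu_i$: each $(G_i/P_i, \nu_{G_i/P_i})$ is the $(G_i, \mu_i)$-boundary by Furstenberg's classical theorem, and the product is the $(G, \mu)$-boundary by \cite[Corollary 3.2]{BS04}. The structural hypotheses are satisfied: $\mathscr{Q}\mathscr{Z}(G_i) = \mathscr{Z}(G_i)$ for any $k_i$-isotropic almost $k_i$-simple $\mathbf{G}_i$ by a Borel density argument (an element with open centralizer has centralizer Zariski-dense in $\mathbf{G}_i$, hence is central by almost simplicity), and the boundary freeness of $(G_i, \mu_i)$ is precisely the Example of the introduction.

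The main obstacle is that $\Gamma$ is only assumed to satisfy $\prod_{j \neq i} G_j^+ \subset \overline{\widehat{p_i}(\Gamma)}$, weaker than the genuinely dense projections required by Theorem \ref{thmA}. My plan to bridge this gap is to inspect where density enters the proof of Theorem \ref{thmA} and weaken the hypothesis accordingly. Density intervenes only through Lemma \ref{lem-sequence}, which invokes Kakutani's random ergodic theorem for the pmp action of one factor on $G/\Gamma$; what is actually needed is enough ergodicity to produce an element $z \in \mathscr{C} \times \mathscr{K}_2$ whose forward orbit visits the target set. Since $G^+ = \prod_i G_i^+$ is cocompact and normal in $G$ and since $\prod_{i \neq j} G_i^+ \subset \overline{\widehat{p_j}(\Gamma)}$, the action $G_j^+ \curvearrowright G/\Gamma$ remains ergodic on each $G^+$-orbit in $G/\Gamma$, which is enough to extract the sequence $(\gamma_n)_n$ of the lemma (one replaces the random walk on $G_j$ by a walk on $G_j^+$ driven by a compatible admissible measure). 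With this adjustment in place, Theorem \ref{thm-continuous} and Corollary \ref{cor-ergodic} extend to the present setting; combined with the $\Gamma$-ergodicity of $\rL(\Lambda \curvearrowright G/P)$ furnished by Lemma \ref{lem-conjugation}, the proof of Theorem \ref{thmA} then produces $M = \rL(\Lambda \curvearrowright C)$ with $C = \prod_i C_i$ and each $C_i$ a $(G_i, \mu_i)$-boundary. To conclude, the classical Margulis factor theorem \cite[Theorem IV.2.11]{Ma91} identifies each $C_i$ uniquely as $G_i/\mathbf{Q}_i(k_i)$ for a parabolic $k_i$-subgroup $\mathbf{P}_i \subset \mathbf{Q}_i \subset \mathbf{G}_i$, and the standard description of the finite lattice of parabolics above $\mathbf{P}_i$ gives the invariance of $\sum_i \rk_{k_i}(\mathbf{G}_i)$.
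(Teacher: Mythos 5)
Your plan for $d = 1$ matches the paper (which reproves Theorem D of \cite{BBH21} via its Theorem 5.4), and your framing of the obstacle in the case $d \geq 2$ --- that $\Gamma < G$ only satisfies $\prod_{j \neq i} G_j^+ < \overline{\widehat p_i(\Gamma)}$ rather than genuine density --- is exactly right. But the way you propose to bridge that gap has a genuine problem, and it is not the route the paper takes.

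You propose to keep the ambient group $G = \prod_i G_i$ and to rerun Lemma~\ref{lem-sequence} with the random walk driven by a measure on $G_j^+$, invoking ``ergodicity on each $G^+$-orbit.'' This does not obviously work. Lemma~\ref{lem-sequence} hinges on two features: (a) the walk on the second coordinate acts \emph{ergodically} on $(G/\Gamma, m)$, so Kakutani gives almost-sure positive visiting frequency to $(\mathscr K_1 \times \mathscr K_2)\Gamma$; and (b) there exists a closed ball $\mathscr C$ in the first coordinate, disjoint from $\mathscr F$, such that $m((\mathscr C \times \mathscr K_2)\Gamma) > 0$, which forces a lattice element $\gamma_{n+1}$ with $p_1(\gamma_{n+1}) \in \mathscr K_1 \mathscr C$. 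When $\overline{p_1(\Gamma)} \neq G_1$, point (b) already fails: $\mathscr C$ is a ball in $G_1$, but $p_1(\Gamma)$ is dense only in the proper closed subgroup $H_1 = \overline{p_1(\Gamma)}$, so $(\mathscr C \times \mathscr K_2)\Gamma$ can have measure zero in $G/\Gamma$. Restricting the walk to $G^+$ does not repair this, and ``ergodic on each $G^+$-orbit'' is not enough for the Kakutani step unless you also control which orbit your chosen $z$ lives on and whether the target set meets that orbit with positive conditional measure. Your claim that ``Theorem~\ref{thm-continuous} and Corollary~\ref{cor-ergodic} extend to the present setting'' is therefore unsupported; these results are stated for lattices embedded with dense projections, and the modification you sketch does not establish them in the weaker setting.

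The paper's actual device is cleaner and sidesteps all of this: after permuting indices, one \emph{changes the ambient group} to $H = H_1 \times H_2$ with $H_1 = \overline{p_1(\Gamma)} < G_1$ and $H_2 = \overline{\widehat p_1(\Gamma)} < \prod_{j\geq 2} G_j$. Since $G_1^+ < H_1$ and $\prod_{j\geq 2} G_j^+ < H_2$, the inclusion $\Gamma < H_1 \times H_2$ is a lattice embedded with dense projections \emph{by construction}, and Lemma~\ref{lem-sequence}, Theorem~\ref{thm-continuous}, Corollary~\ref{cor-ergodic}, and Theorem~\ref{thmA} apply verbatim. One then verifies that the hypotheses of Theorem~\ref{thmA} hold for $(H_1, H_2)$: the Furstenberg--Poisson boundary of $H_i$ is still $G_i/P_i$ (via \cite[Corollary 5.2]{BS04}, using $G_i^+ < H_i$), $\mathscr{Q}\mathscr{Z}(H_i) = \mathscr Z(H_i)$ (via \cite[Theorem I.1.5.6]{Ma91} and \cite[Proposition 4.3]{CM08}), and the boundary freeness condition holds (via \cite[Lemma 6.2]{BBHP20}). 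You should replace your attempted modification of Lemma~\ref{lem-sequence} with this change-of-ambient-group argument; as stated, your step is a gap.
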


\begin{proof}
Regard $\mathscr B = \rL(\Lambda \curvearrowright G/P)$ as a $\Gamma$-von Neumann algebra and denote by $\rE : \mathscr B \to \rL^\infty(G/P)$ the canonical $\Gamma$-equivariant faithful normal conditional expectation. By Lemma \ref{lem-conjugation}, the action $\Gamma \curvearrowright \mathscr B$ is ergodic. Let $\rL(\Lambda) \subset M \subset \mathscr B$ be an intermediate von Neumann subalgebra and set $\Phi = \rE|_M : M \to \rL^\infty(G/P)$. Using the same notation and following the same argument as in the first paragraph of the proof of Theorem \ref{thmA}, we conclude that if $\Phi : M \to \rL^\infty(G/P)$ is invariant, then $M = \rL(\Lambda)$. Next, assume that $\Phi : M \to \rL^\infty(G/P)$ is not invariant. There are two cases to consider.

Firstly, assume that $d = 1$. Let $k = k_1$ and $\mathbf G = \mathbf G_1$ such that $\rk_k(\mathbf G) \geq 2$. We proceed as in the proof of \cite[Theorem D]{BBH21}. By \cite[Theorem 5.4]{BBH21} (see also \cite[Theorem B]{BH19}), there exist a proper parabolic $k$-subgroup $\mathbf P < \mathbf Q < \mathbf G$ and a $\Gamma$-equivariant unital normal embedding $\iota : \rL^\infty(G/Q) \to M$ such that $\Phi \circ \iota : \rL^\infty(G/Q) \to \rL^\infty(G/P)$ is the canonical embedding with $Q = \mathbf Q(k)$. Then we have $\rL(\Lambda \curvearrowright G/Q) \subset M$. Since $\Lambda \curvearrowright G/Q$ is essentially free (see \cite[Lemma 6.2]{BBHP20}), a combination of \cite[Theorem IV.2.11]{Ma91} and  \cite[Theorem 3.6]{Su18} shows that there exists a unique parabolic $k$-subgroup $\mathbf Q < \mathbf R < \mathbf G$ such that $M = \rL(\Lambda \curvearrowright G/R)$ with $R = \mathbf R(k)$.

Secondly, assume that $d \geq 2$. Since $\Gamma < G$ need not be embedded with dense projections, we cannot apply Theorem \ref{thmA}. However, we may proceed as in the proof of \cite[Proposition 6.1]{BBHP20}. Upon permuting the indices, letting $H_1 = \overline{p_1(\Gamma)} < G_1$ and $H_2 = \overline{\widehat p_1(\Gamma)} < \prod_{j = 2}^d G_j$, we have that $\Gamma < H_1 \times H_2$ is a lattice embedded with dense projections and the von Neumann subalgebra $M_1 \subset M$ of all $H_1$-continuous elements in $M$ is nontrivial. Set $B_1 = G_1/P_1$ and $B_2 = \prod_{j = 2}^d G_j/P_j$. Since $G_1^+ < H_1$ and $\prod_{j = 2}^d G_j^+ < H_2$, as explained in the proof of \cite[Proposition 6.1]{BBHP20} and using \cite[Corollary 5.2]{BS04}, for every $i \in \{1, 2\}$, we may choose Borel probability measures $\mu_i \in \Prob(H_i)$ and $\nu_i \in \Prob(B_i)$ such that $(B_i, \nu_i)$ is the $(H_i, \mu_i)$-Furstenberg-Poisson boundary. Since $G_1^+ < H_1$ and $\prod_{j = 2}^d G_j^+ < H_2$, using \cite[Theorem I.1.5.6$(\rm i)$]{Ma91} and \cite[Proposition 4.3]{CM08}, we infer that for every $i \in \{1, 2\}$, $\mathscr{Q}\mathscr{Z}(H_i) = \mathscr Z(H_i)$. Moreover, \cite[Lemma 6.2]{BBHP20} implies that for every $i \in \{1, 2\}$, the pair $(H_i, \mu_i)$ satisfies the boundary freeness condition. We may now apply Theorem \ref{thmA} to obtain the conclusion.

For every $i \in \{1, \dots, d\}$, there are $2^{\rk_{k_i}(\mathbf G_i)}$ intermediate parabolic $k_i$-subgroups $\mathbf P_i < \mathbf Q_i < \mathbf G_i$. The NFT implies that there are $2^{\sum_{i = 1}^d \rk_{k_i}(\mathbf G_i)}$ intermediate von Neumann subalgebras $\rL(\Lambda) \subset M \subset \rL(\Lambda \curvearrowright G/P)$. Thus, the rank $\sum_{i = 1}^d \rk_{k_i}(\mathbf G_i)$ is an invariant of the inclusion $\rL(\Lambda) \subset \rL(\Lambda \curvearrowright G/P)$.
\end{proof}

\begin{proof}[Proof of Theorem \ref{thmC}]
For every $i \in \{1, \dots, d\}$, we have $\mathscr{Q}\mathscr{Z}(G_i) = \{e\}$ (see \cite[Lemma 3.1.1 and Proposition 3.1.2]{BM00a}). Moreover, the nonsingular action $\Gamma \curvearrowright \partial \mathsf T_i$ is essentially free (see \cite[Proposition 6.4]{BBHP20}). This condition is sufficient to apply the proof of Theorem \ref{thmA} to obtain the conclusion.
\end{proof}


\bibliographystyle{plain}

\end{document}